\documentclass[9pt,a4paper]{article}
\usepackage{amsmath}
\usepackage{amsfonts}
\usepackage{amsthm}
\usepackage{amssymb}
\usepackage{booktabs,amsmath}
\def\Xint#1{\mathchoice
   {\XXint\displaystyle\textstyle{#1}}%
   {\XXint\textstyle\scriptstyle{#1}}%
   {\XXint\scriptstyle\scriptscriptstyle{#1}}%
   {\XXint\scriptscriptstyle\scriptscriptstyle{#1}}%
   \!\int}
\def\XXint#1#2#3{{\setbox0=\hbox{$#1{#2#3}{\int}$}
     \vcenter{\hbox{$#2#3$}}\kern-.5\wd0}}

\def\dashint{\Xint-}
\usepackage[english]{babel}
\usepackage{xcolor}
\usepackage{enumerate}

%
%
%
%
\theoremstyle{plain}
\newtheorem{theo}{Theorem}[section]
\newtheorem{lem}[theo]{Lemma}
\newtheorem{prop}[theo]{Proposition}
\newtheorem{cor}[theo]{Corollary}%

%
%
\theoremstyle{definition}
\newtheorem{definition}[theo]{Definition}
%
%
\theoremstyle{remark}
\newtheorem{rem}[theo]{Remark}
%
%

%
%
\numberwithin{equation}{section}

%
%
%
%
%
%

\newcommand{\R}{\mathbb{R}}
\newcommand{\N}{\mathbb{N}}

%
%
%
%
%

\title{Doubling Inequality at the Boundary for the Kirchhoff - Love Plate's Equation with Dirichlet Conditions
\thanks{Antonino Morassi is supported by PRIN 2015TTJN95 ``Identificazione
e diagnostica di sistemi strutturali complessi''. Edi Rosset and
Sergio Vessella are supported by Progetto GNAMPA 2019 ``Propriet\`a delle soluzioni di equazioni alle derivate parziali e applicazioni ai problemi inversi'' Istituto Nazionale di Alta Matematica (INdAM). Edi Rosset is
supported by FRA2016 ``Problemi inversi, dalla stabilit\`a alla
ricostruzione'', Universit\`a degli Studi di Trieste.}}

\author{Antonino Morassi\thanks{Dipartimento Politecnico di Ingegneria e Architettura,
Universit\`a degli Studi di Udine, via Cotonificio 114, 33100
Udine, Italy. E-mail: \textsf{antonino.morassi@uniud.it}}, \  Edi
Rosset\thanks{Dipartimento di Matematica e Geoscienze,
Universit\`a degli Studi di Trieste, via Valerio 12/1, 34127
Trieste, Italy. E-mail: \textsf{rossedi@units.it}} \ and Sergio
Vessella\thanks{Dipartimento di Matematica e Informatica ``Ulisse
Dini'', Universit\`a degli Studi di Firenze, Via Morgagni 67/a,
50134 Firenze, Italy. E-mail: \textsf{sergio.vessella@unifi.it}}}

\date{}

\begin{document}

\maketitle

\noindent \textbf{Abstract.} The main result of this paper is a doubling inequality at the boundary for solutions to the Kirchhoff-Love isotropic plate's equation satisfying homogeneous Dirichlet conditions. This result, like the three sphere inequality with optimal exponent at the boundary proved in Alessandrini, Rosset, Vessella, Arch. Ration. Mech. Anal. (2019), implies the Strong Unique Continuation Property at the Boundary (SUCPB). Our approach is based on a suitable Carleman estimate, and involves an ad hoc reflection of the solution. We also give a simple application of our main result, by weakening the standard hypotheses ensuring uniqueness for the Cauchy problem for the plate equation.
\medskip

\medskip

\noindent \textbf{Mathematical Subject Classifications (2010): 35B60, 35J30, 74K20, 35R25, 35R30, 35B45}

\medskip

\medskip

\noindent \textbf{Key words:}  isotropic elastic plates,
doubling inequalities at the boundary, unique continuation, Carleman estimates.

\section{Introduction} \label{sec:
introduction}

Let us consider the following Kirchhoff - Love plate's equation in a bounded domain
$\Omega\subset \R^2$
\begin{equation}
    \label{eq:equazione_piastra-int}
    L(v) := {\rm div}\left ({\rm div} \left ( B(1-\nu)\nabla^2 v + B\nu \Delta v I_2 \right ) \right )=0, \qquad\hbox{in
    } \Omega,
\end{equation}
where $v$ represents the transversal displacement, $B$ is the \emph{bending stiffness} and $\nu$ the \emph{Poisson's coefficient} (see \eqref{eq:3.stiffness}--\eqref{eq:3.E_nu} for precise definitions).

Assuming $B,\nu\in C^4(\overline{\Omega})$ and given an open portion $\Gamma$ of $\partial\Omega$ of $C^{6, \alpha}$ class, the following Strong Unique Continuation Property at the Boundary (SUCPB) has been proved in \cite{l:ARV}

\begin{equation}\label{formulaz-sucpb-piastra}
\begin{cases}
 Lv=0, \mbox{ in } \Omega, \\
v =\frac{\partial v}{\partial n}=0, \mbox{ on } \Gamma,   \\
\int_{\Omega\cap B_r(P)}v^2=\mathcal{O}(r^k), \mbox{ as } r\rightarrow 0, \forall k\in \mathbb{N},
\end{cases}\Longrightarrow \quad v\equiv 0 \mbox{ in } \Omega,
\end{equation}
where $P$ is any point in $\Gamma$ and $n$ is the outer unit normal.
The above result is the first nontrivial SUCPB result for fourth-order elliptic equations.  Until paper \cite{l:ARV} appeared, such SUCPB results were confined to second order elliptic partial differential equations \cite{l:AdEsKe}, \cite{l:AdEs}, \cite{l:ARRV}, \cite{l:ApEsWaZh}, \cite{l:BaGa}, \cite{l:BoWo}, \cite{l:KeWa}, \cite{l:KuNy}, \cite{l:Si}.

The SUCPB and the related quantitative estimates (in the form of three spheres inequality and doubling inequality), turned out to be a crucial property to prove optimal stability estimates for inverse problems with unknown boundaries for second order elliptic equations \cite{l:AlBeRoVe}. The optimality of the logarithmic character of the stability estimates in \cite{l:AlBeRoVe} has been proved in \cite{l:DcRo}. For this reason, the investigation about the SUCPB has been successfully extended to second order parabolic equations \cite{l:CaRoVe}, \cite{l:DcRoVe}, \cite{l:EsFe}, \cite{l:EsFeVe}, \cite{l:Ve1} and to wave equation with time independent coefficients \cite{l:SiVe}, \cite{l:Ve2}. We refer to the Introduction and the references in
\cite{l:ARV} for a more complete description of the unique continuation principle in the interior for plate equation and for the SUCPB for elliptic equations.

An application of the SUCPB proved in \cite{l:ARV} to inverse problems has been given in \cite{l:mrv19}, where an optimal stability estimate for the  identification of a rigid inclusion in an isotropic Kirchhoff - Love plate was proved. A crucial tool used in \cite{l:mrv19} is a three spheres inequality at the boundary with optimal exponent \cite[Theorem 5.1]{l:ARV}.

 The main result of the present paper is the following doubling inequality at the boundary (see Theorem \ref{theo:40.teo} for precise statement)

 \begin{equation}
    \label{eq:10.6.1102-intro}
    \int_{B_{2r}(P)\cap \Omega}|v|^2\leq K \int_{B_{r}(P)\cap \Omega}|v|^2,
\end{equation}
where $K$ is constant depending by $v$, but \textit{independent} of $r$. It is well known that also doubling inequality implies the SUCPB, \cite{l:Giaq}, \cite{l:GaLi}.
The interior version of the doubling inequality for the plate equation was obtained in \cite{l:LiNaWa} and \cite{l:DLMRVW} for anisotropic plates. It is worth noticing that the doubling inequality turns out to be a more powerful tool than three spheres inequality. In fact, the doubling inequality in the interior has been employed to investigate the smallness propagation {}from measurable sets (of positive measure) of a solution to second order elliptic equation \cite{l:LoMa}, and to prove size estimates for general inclusions in electric conductors and in elastic bodies \cite{l:AMR}, \cite{l:DLMRVW}, \cite{l:DSV}, \cite{l:mrv09}.
In particular, in Corollary \ref{application} we give a first simple application of the doubling inequality at the boundary \eqref{eq:10.6.1102-intro}, which allows to weaken the hypotheses ensuring uniqueness for the Cauchy problem for Kirchhoff - Love isotropic plates.

The proof of inequality \eqref{eq:10.6.1102-intro} is based on a strategy similar but sharper than the one followed in \cite{l:ARV}. Firstly, similarly to \cite{l:ARV}, we flatten the boundary $\Gamma$ by introducing a suitable conformal mapping (see Proposition \ref{prop:conf_map}). Then we combine a reflection argument with the following Carleman estimate

\begin{gather}
    \label{eq:24.4-intro}
	\tau^4r^2\int\rho^{-2-2\tau}|U|^2dxdy +\sum_{k=0}^3 \tau^{6-2k}\int\rho^{2k+1-2\tau}|D^kU|^2dxdy\leq \\\nonumber  \leq C
	\int\rho^{8-2\tau}(\Delta^2 U)^2dxdy,
\end{gather}
for every $\tau\geq \overline{\tau}$, for every $r\in (0, 1)$ and for every $U\in C^\infty_0(B_{1}\setminus\ \overline{B}_{r/4})$, where $\rho(x,y)\sim \sqrt{x^2+y^2}$ as $(x,y)\rightarrow (0,0)$ (see Proposition \ref{prop:Carleman} for a precise statement). We emphasize that, with respect to the Carleman estimate employed in \cite{l:ARV}, the presence of the first term in the left hand side of \eqref{eq:24.4-intro} is the key ingredient in order to prove our doubling inequality at the boundary. At the best of our knowledge, Bakri is
the first author who derived a doubling inequality in the interior
starting {}from a Carleman estimate of the kind \eqref{eq:24.4-intro} \cite{l:Bakri1}, see also \cite{l:Bakri2} and \cite{l:Zhu}.

The paper is organized as follows. In Section \ref{sec:
notation} we introduce some notation and definitions, and state our main result, Theorem \ref{theo:40.teo}. In Section \ref{sec:Preliminary} we collect some auxiliary propositions, precisely Proposition \ref{prop:conf_map} introducing the conformal map used to flatten the boundary, Propositions \ref{prop:16.1} and \ref{prop:19.2} concerning the reflection with respect to flat boundaries and its properties, a Hardy's inequality (Proposition \ref{prop:Hardy}), the Carleman estimate for bi-Laplace operator (Proposition \ref{prop:Carleman}), and some interpolation estimates (Lemma \ref{lem:Agmon}) and Caccioppoli-type inequality (Lemma \ref{lem:intermezzo}).
In Section \ref{sec:doubling} we establish the doubling inequality at the boundary, and we state and prove Corollary \ref{application}.
Finally, the Appendix contains the proof of Proposition \ref{prop:Carleman}, in which we have presented the arguments in detailed form for the reader's convenience.

\section{Notation and main result} \label{sec:
notation}

We shall generally denote points in $\R^2$ by $x=(x_1,x_2)$ or $y=(y_1,y_2)$, except for
Sections \ref{sec:Preliminary} and \ref{sec:doubling} where we rename $x,y$ the coordinates in $\R^2$.
In places we will use equivalently the symbols $D$ and $\nabla$ to denote the gradient of a function. Also we use the multi-index notation.
We shall denote by $B_r(P)$ the disc in $\R^2$ of radius $r$ and
center $P$, by $B_r$ the disk of radius $r$ and
center $O$, by $B_r^+$, $B_r^-$ the hemidiscs in $\R^2$  of radius $r$ and
center $O$ contained in the halfplanes $\R^2_+= \{x_2>0\}$, $\R^2_-= \{x_2<0\}$ respectively, and by $R_{
a,b}$ the rectangle $(-a,a)\times(-b,b)$.

Given a matrix $A =(a_{ij})$, we shall denote by $|A|$ its Frobenius norm $|A|=\sqrt{\sum_{i,j}a_{ij}^2}$.

Along our proofs, we shall denote by $C$ a constant which may change {}from line to line.

\begin{definition}
  \label{def:reg_bordo} (${C}^{k,\alpha}$ regularity)
Let $\Omega$ be a bounded domain in ${\R}^{2}$. Given $k,\alpha$,
with $k\in\N$, $0<\alpha\leq 1$, we say that a portion $S$ of
$\partial \Omega$ is of \textit{class ${C}^{k,\alpha}$ with
constants $r_{0}$, $M_{0}>0$}, if, for any $P \in S$, there
exists a rigid transformation of coordinates under which we have
$P=0$ and
\begin{equation*}
  \Omega \cap R_{r_0,2M_0r_0}=\{x \in R_{r_0,2M_0r_0} \quad | \quad
x_{2}>g(x_1)
  \},
\end{equation*}
where $g$ is a ${C}^{k,\alpha}$ function on
$[-r_0,r_0]$
satisfying
\begin{equation*}
g(0)=g'(0)=0,
\end{equation*}
\begin{equation*}
\|g\|_{{C}^{k,\alpha}([-r_0,r_0])} \leq M_0r_0,
\end{equation*}
where
\begin{equation*}
\|g\|_{{C}^{k,\alpha}([-r_0,r_0])} = \sum_{i=0}^k  r_0^i\sup_{[-r_0,r_0]}|g^{(i)}|+r_0^{k+\alpha}|g|_{k,\alpha},
\end{equation*}

\begin{equation*}
|g|_{k,\alpha}= \sup_ {\overset{\scriptstyle t,s\in [-r_0,r_0]}{\scriptstyle
t\neq s}}\left\{\frac{|g^{(k)}(t) - g^{(k)}(s)|}{|t-s|^\alpha}\right\}.
\end{equation*}

\end{definition}

We shall consider an isotropic thin elastic plate $\Omega\times \left[-\frac{h}{2},\frac{h}{2}\right]$, having middle plane $\Omega$ and thickness $h$. Under the Kirchhoff - Love theory, the transversal displacement $v$ satisfies the following fourth-order partial differential equation

\begin{equation}
    \label{eq:equazione_piastra}
    L(v) := {\rm div}\left ({\rm div} \left ( B(1-\nu)\nabla^2 v + B\nu \Delta v I_2 \right ) \right )=0, \qquad\hbox{in
    } \Omega.
\end{equation}

Here the \emph{bending stiffness} $B$ is given by

\begin{equation}
  \label{eq:3.stiffness}
  B(x)=\frac{h^3}{12}\left(\frac{E(x)}{1-\nu^2(x)}\right),
\end{equation}
and the \emph{Young's modulus} $E$ and the \emph{Poisson's coefficient} $\nu$ can be written in terms of the Lam\'{e} moduli as follows
\begin{equation}
  \label{eq:3.E_nu}
  E(x)=\frac{\mu(x)(2\mu(x)+3\lambda(x))}{\mu(x)+\lambda(x)},\qquad\nu(x)=\frac{\lambda(x)}{2(\mu(x)+\lambda(x))}.
\end{equation}

On the Lam\'{e} moduli, we shall assume

\medskip
\emph{i) Strong convexity}:

\begin{equation}
  \label{eq:3.Lame_convex}
  \mu(x)\geq \alpha_0>0,\qquad 2\mu(x)+3\lambda(x)\geq\gamma_0>0, \qquad \hbox{ in } \Omega,
\end{equation}
where $\alpha_0$, $\gamma_0$ are positive constants;

\medskip
\emph{ii) Regularity}:

\begin{equation}
  \label{eq:C4Lame}
  \|\lambda\|_{C^4(\overline{\Omega}_{r_0})}, \|\mu\|_{C^4(\overline{\Omega}_{r_0})}\leq \Lambda_0,
\end{equation}
with $\Lambda_0$ a positive constant.

It is easy to see that equation \eqref{eq:equazione_piastra} can be rewritten in the form

\begin{equation}
    \label{eq:equazione_piastra_non_div}
    \Delta^2 v= \widetilde{a}\cdot \nabla\Delta v + \widetilde{q}_2(v) \qquad\hbox{in
    } \Omega,
\end{equation}
with
\begin{equation}
    \label{eq:vettore_a_tilde}
    \widetilde{a}=-2\frac{\nabla B}{B},
\end{equation}

\begin{equation}
    \label{eq:q_2}
    \widetilde{q}_2(v)=-\sum_{i,j=1}^2\frac{1}{B}\partial^2_{ij}(B(1-\nu)+\nu B\delta_{ij})\partial^2_{ij} v.
\end{equation}
Let
\begin{equation}
   \label{eq:Omega_r_0}
\Omega_{r_0} = \left\{ x\in R_{r_0,2M_0r_0}\ |\ x_2>g(x_1) \right\},
\end{equation}

\begin{equation}
   \label{eq:Gamma_r_0}
\Gamma_{r_0} = \left\{(x_1,g(x_1))\ |\ x_1\in (-r_0,r_0)\right\},
\end{equation}
with
\begin{equation*}
g(0)=g'(0)=0,
\end{equation*}

\begin{equation}
   \label{eq:regol_g}
\|g\|_{{C}^{6,\alpha}([-r_0,r_0])} \leq M_0r_0,
\end{equation}
for some $\alpha\in (0,1]$.
Let
$v\in H^2(\Omega_{r_0})$ satisfy
\begin{equation}
  \label{eq:equat_u_tilde}
  L(v)= 0, \quad \hbox{ in } \Omega_{r_0},
\end{equation}

\begin{equation}
  \label{eq:Diric_u_tilde}
  v =  \frac{\partial v}{\partial n}= 0, \quad \hbox{ on } \Gamma_{r_0},
\end{equation}
where $L$ is given by \eqref{eq:equazione_piastra} and $n$ denotes the outer unit normal.

The assumptions \eqref{eq:3.Lame_convex}, \eqref{eq:regol_g} and \eqref{eq:C4Lame} guarantee that $v\in H^6(\Omega_r)$, see for instance \cite{l:a65}.

\begin{theo} [{\bf Doubling inequality at the boundary}]
    \label{theo:40.teo}
    Under the above hypotheses, there exist $k>1$ and $C>1$ only depending on $\alpha_0$, $\gamma_0$, $\Lambda_0$, $M_0$, $\alpha$, such that, for every $r<\frac{r_0}{C}$ we have
		\begin{equation}
    \label{eq:10.6.1102}
    \int_{B_{2r}\cap \Omega_{r_0}}|v|^2\leq CN^k\int_{B_{r}\cap \Omega_{r_0}}|v|^2,
\end{equation}
where
\begin{equation}
    \label{eq:10.6.1108}
    N=\frac{\int_{B_{r_0}\cap \Omega_{r_0}}|v|^2}{\int_{B_{\frac{r_0}{C}}\cap \Omega_{r_0}}|v|^2}.
\end{equation}

\end{theo}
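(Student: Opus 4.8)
The plan is to transfer the problem to a half-disc by the conformal flattening of Proposition~\ref{prop:conf_map}, perform an ad hoc even/odd reflection across the flat boundary so as to obtain a function defined on a full disc and satisfying a bi-Laplace-type inequality with lower order terms, and then feed this reflected function into the Carleman estimate \eqref{eq:24.4-intro} to derive the doubling inequality by a three-region argument. More precisely, after composing $v$ with the conformal map $\Phi$ one obtains a function $w$ on $B_{r_0}^+$ solving $\Delta^2 w = \widetilde a\cdot\nabla\Delta w + \widetilde q_2(w)$ (with new, still regular coefficients, since $\Delta^2$ transforms nicely under conformal change of variables up to lower order terms) together with the homogeneous Cauchy data $w=\partial_{x_2}w=0$ on $\{x_2=0\}\cap B_{r_0}$. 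Using Propositions~\ref{prop:16.1} and \ref{prop:19.2} I would reflect $w$ to a function $W$ on the full disc $B_{r_0}$ in such a way that $W\in H^2$ globally, agrees with $w$ on the upper half, and satisfies a differential inequality of the form $|\Delta^2 W|\le C\sum_{k\le 3}\rho^{k-4}|D^kW|$ on $B_{r_0}$ (with $\rho\sim|x|$); the Cauchy data guarantee the reflected pieces match across $\{x_2=0\}$ to the required order, and the weight $\rho^{-4}$ absorbing the singularity of the reflection at the origin is exactly why the Carleman weights in \eqref{eq:24.4-intro} are chosen with those powers of $\rho$.

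Next I would localize: fix $r$ small, choose a cutoff $\chi$ equal to $1$ on an annular region $B_{\theta_2 r}\setminus B_{\theta_1 r}$ (say $\theta_1\sim 1/2$, and with outer radius a fixed fraction of $r_0$) and supported in $B_{2\theta_2 r}\setminus \overline{B}_{\theta_1 r/2}$, apply \eqref{eq:24.4-intro} to $U=\chi W$, and expand $\Delta^2(\chi W)$ by Leibniz. The commutator terms $[\Delta^2,\chi]W$ are supported in two ``shells'' — one near radius $r$ and one near radius $r_0$ — and are controlled, via the Caccioppoli-type inequality Lemma~\ref{lem:intermezzo} and the interpolation estimates Lemma~\ref{lem:Agmon}, by $L^2$ norms of $W$ on slightly larger shells; the genuinely right-hand-side term $\chi\Delta^2 W$ is absorbed into the left side using the differential inequality above together with the Hardy inequality Proposition~\ref{prop:Hardy} (this is where the first term $\tau^4 r^2\int\rho^{-2-2\tau}|U|^2$ on the left of \eqref{eq:24.4-intro} is indispensable: it dominates the $\rho^{-8+\,}$-weighted square of $W$ coming from the worst reflection term). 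After absorption one is left with an inequality schematically of the form
\begin{equation*}
 \rho_{r}^{-2\tau}\int_{B_{\theta_2 r}\setminus B_{\theta_1 r}}|W|^2
 \;\le\; C\,\rho_{r/2}^{-2\tau}\int_{B_{r}}|W|^2
 \;+\; C\,\rho_{r_0}^{-2\tau}\int_{B_{r_0}}|W|^2,
\end{equation*}
where $\rho_s$ denotes the value of the weight at radius $s$; since $\rho$ is monotone the three radii satisfy $\rho_{r/2}<\rho_{r}<\rho_{r_0}$, and one optimizes over $\tau\ge\overline\tau$.

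The optimization is the standard Carleman bootstrap: writing $\varepsilon=\int_{B_r}|W|^2$, $A=\int_{B_{r_0}}|W|^2$ and $H=\int_{B_{\theta_2 r}\setminus B_{\theta_1 r}}|W|^2$ (after adding $\int_{B_{\theta_1 r}}|W|^2=\varepsilon'\le\varepsilon$ to the left, so that $H+\varepsilon'\ge \int_{B_{\theta_2 r}}|W|^2$), the inequality reads $H\le C\,q^{\tau}\varepsilon + C\,Q^{\tau}A$ with fixed ratios $q=(\rho_{r/2}/\rho_r)^{2}<1$ and $Q=(\rho_{r_0}/\rho_r)^{2}>1$ independent of $r$; choosing $\tau$ to balance the two terms yields $\int_{B_{\theta_2 r}}|W|^2\le C\,\varepsilon^{1-\beta}A^{\beta}$ for an exponent $\beta\in(0,1)$ depending only on $q,Q$, hence only on the structural constants. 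Translating back through $W\rightsquigarrow w\rightsquigarrow v$ and the conformal map (which distorts radii by bounded factors), and iterating the interpolation inequality a bounded number of times to pass from $B_{\theta_2 r}$ to $B_{2r}$ and from $\varepsilon$ to $\int_{B_r\cap\Omega_{r_0}}|v|^2$, gives precisely \eqref{eq:10.6.1102} with $N$ as in \eqref{eq:10.6.1108} and $k=\beta^{-1}$ or similar. The main obstacle — and the place where real care is required — is the reflection step together with the absorption: one must produce a reflected $W$ whose defect from solving $\Delta^2W=0$ is controlled by exactly the weighted norms that \eqref{eq:24.4-intro} can absorb (no worse singularity than $\rho^{-4}D^0W$), and simultaneously verify that the commutator/cutoff terms live only on the two shells and not on the bulk; getting these powers of $\rho$ and the matching of Cauchy data right at the origin is the technical heart of the argument.
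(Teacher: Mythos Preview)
Your outline has the right ingredients (conformal flattening, reflection, cutoff into the Carleman estimate, absorption of lower order terms), but the final optimization step and the role you assign to the new Carleman term are both wrong, and this is where the argument actually breaks.

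The ratio $Q=(\rho_{r_0}/\rho_r)^2$ is \emph{not} independent of $r$: it blows up as $r\to 0$. Consequently your balanced exponent satisfies $\beta\sim \log 2/\log(r_0/r)\to 0$, and $H\le C\varepsilon^{1-\beta}A^\beta$ is merely a three-spheres inequality with degenerating exponent; it does not produce a doubling constant of the form $CN^k$ with fixed $k$ and $N$ computed at \emph{fixed} radii as in \eqref{eq:10.6.1108}. The paper's mechanism is different: after absorption one keeps \emph{two} terms on the left (Lemma~\ref{newlemma1}), namely $R(2r)^{-2\tau}\int_{B_{2r}^+}|u|^2$ and $R^{1-2\tau}\int_{B_R^+}|u|^2$ at a \emph{fixed} intermediate radius $R=\overline R_0/4$. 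One then chooses $\tau_0\sim\log\overline N$, where $\overline N=\int_{B_{\overline R_0}^+}|u|^2\big/\int_{B_{\overline R_0/4}^+}|u|^2$ is a ratio at fixed radii and hence independent of $r$, so that the fixed-radius term on the left swallows the fixed-radius term $(\overline R_0/2)^{-2\tau}\int_{B_{\overline R_0}^+}|u|^2$ on the right. What survives is $(2r)^{-2\tau_0}\int_{B_{2r}^+}|u|^2\le C(r/4)^{-2\tau_0}\int_{B_r^+}|u|^2$, which is doubling with constant $8^{2\tau_0}\sim \overline N^{k}$.

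This is precisely the role of the term $\tau^4 r^2\int\rho^{-2-2\tau}|U|^2$: restricted to $B_{2r}\setminus B_{r/2}$ and using $\rho\le 2r$, it yields the weight $r^2(2r)^{-2-2\tau}\sim (2r)^{-2\tau}$ on the left, whereas the ordinary $k=0$ term of the Carleman sum only gives $(2r)^{1-2\tau}$, carrying an unwanted factor of $r$ that would spoil the doubling constant. It is \emph{not} used to absorb any reflection singularity. The reflection (Propositions~\ref{prop:16.1}--\ref{prop:19.2}) produces in $B_1^-$ a right-hand side $F_1=H+P_2(w)+P_3(u)(\cdot,-y)$, where $H$ carries factors $y^{-1}$ multiplying specific second-derivative combinations that \emph{vanish at $y=0$} (see \eqref{eq:23.1}--\eqref{eq:23.3}). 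These are handled by Hardy's inequality \eqref{eq:24.1} applied in the $y$-variable, which converts $\int\rho^{8-2\tau}y^{-2}|D^2 u|^2$ into $\int\rho^{8-2\tau}|D^3 u|^2+\tau^2\int\rho^{6-2\tau}|D^2 u|^2$ --- exactly the $k=2,3$ terms of the standard Carleman sum on the left. There is no $\rho^{-4}|W|$-type term in $\Delta^2\overline u$, and the first Carleman term could not absorb one even if there were.
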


\section{Preliminary results} \label{sec:Preliminary}

In the following Proposition, proved in \cite{l:ARV}, we introduce a conformal map which flattens the boundary
$\Gamma_{r_0}$ and preserves the structure of equation \eqref{eq:equazione_piastra_non_div}.

\begin{prop} [{\bf Reduction to a flat boundary}]
    \label{prop:conf_map}
Under the hypotheses of Theorem \ref{theo:40.teo}, there exists an injective sense preserving differentiable map
\begin{equation*}
\Phi=(\varphi,\psi):[-1,1]\times[0,1]\rightarrow \overline{\Omega}_{r_0}
\end{equation*}
which is conformal and satisfies
\begin{equation}
   \label{eq:9.assente}
  \Phi((-1,1)\times(0,1))\supset B_{\frac{r_0}{K}}(0)\cap \Omega_{r_0},
\end{equation}
\begin{equation}
   \label{eq:9.2b}
  \Phi(([-1,1]\times\{0\})= \left\{ (x_1,g(x_1))\ |\ x_1\in [-r_1,r_1]\right\},
\end{equation}

\begin{equation}
   \label{eq:9.2a}
  \Phi(0,0)= (0,0),
\end{equation}

\begin{equation}
    \label{eq:gradPhi}
  \frac{c_0r_0}{2C_0}\leq |D\Phi(y)|\leq \frac{r_0}{2}, \quad \forall y\in [-1,1]\times[0,1],
\end{equation}
\begin{equation}
   \label{eq:gradPhiInv}
\frac{4}{r_0}\leq |D\Phi^{-1}(x)|\leq \frac{4C_0}{c_0r_0}, \quad\forall x\in \Phi([-1,1]\times[0,1]),
\end{equation}
\begin{equation}
   \label{eq:stimaPhi}
\frac{r_0}{K}|y|\leq|\Phi(y)|\leq \frac{r_0}{2}|y|, \quad \forall y\in [-1,1]\times[0,1],
\end{equation}
with
$K>8$, $0<c_0<C_0$ being constants only depending on $M_0$ and $\alpha$.

Letting
\begin{equation}
  \label{eq:def_sol_composta}
  u(y) = v(\Phi(y)), \quad y\in [-1,1]\times[0,1],
\end{equation}
then $u\in H^6((-1,1)\times(0,1))$ and satisfies
\begin{equation}
    \label{eq:equazione_sol_composta}
    \Delta^2 u= a\cdot \nabla\Delta u + q_2(u), \qquad\hbox{in
    } (-1,1)\times(0,1),
\end{equation}
\begin{equation}
    \label{eq:Dirichlet_sol_composta}
    u(y_1,0)= u_{y_2}(y_1,0) =0, \quad \forall y_1\in (-1,1),
\end{equation}
where
\begin{equation*}
  a(y) = |\nabla \varphi(y)|^2\left([D\Phi(y)]^{-1}\widetilde{a}(\Phi(y))-2\nabla(|\nabla \varphi(y)|^{-2})\right),
\end{equation*}
$a\in C^3([-1,1]\times[0,1], \R^2)$, $q_2=\sum_{|\alpha|\leq 2}c_\alpha D^\alpha$ is a second order elliptic operator with coefficients $c_\alpha\in C^2([-1,1]\times[0,1])$,
 satisfying
\begin{equation}
    \label{eq:15.2}
    \|a\|_{ C^3([-1,1]\times[0,1], \R^2)}\leq M_1,\quad \|c_\alpha\|_{ C^2([-1,1]\times[0,1])}\leq M_1,
\end{equation}
with $M_1>0$ only depending on $\alpha_0, \gamma_0, \Lambda_0, M_0, \alpha$.
\end{prop}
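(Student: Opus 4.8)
The plan is to obtain $\Phi$ as a conformal map which \emph{un-flattens} a reference rectangle onto a curvilinear rectangle adjacent to $\Gamma_{r_0}$, built by perturbing the identity after a rescaling that makes the graph $g$ small, and then to verify the transformed equation via the conformal transformation rule for the Laplacian. Concretely: since $g(0)=g'(0)=0$ and $\|g\|_{C^{6,\alpha}([-r_0,r_0])}\le M_0r_0$, for $\delta\in(0,1)$ the rescaled graph $\widehat g_\delta(s):=(\delta r_0)^{-1}g(\delta r_0 s)$ satisfies $\|\widehat g_\delta\|_{C^{6,\alpha}}\le C(M_0,\alpha)\,\delta$ on the relevant interval (Taylor's formula at the origin bounds the sup-norm, and every further derivative or the H\"older seminorm carries an extra power of $\delta$). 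Fixing $\delta=\delta(M_0,\alpha)$ small, I would construct on $\R^2_+$ a holomorphic map $T=\mathrm{id}+h$, with $h$ holomorphic and decaying at infinity, solving on the real axis the nonlinear boundary condition $\mathrm{Im}\,h(t)=\widehat g_\delta\big(t+\mathrm{Re}\,h(t)\big)$, so that $T$ carries $\R$ into the curve $\{x_2=\widehat g_\delta(x_1)\}$ and fixes $0$; this is solved by a contraction argument, using that a holomorphic function decaying at infinity is recovered from the boundary values of its imaginary part through the harmonic conjugate and that this operator preserves $C^{k,\alpha}$ (Privalov), whence $\|T-\mathrm{id}\|_{C^{6,\alpha}(\overline{B_{\rho_0}^+})}\le C(M_0,\alpha)\,\delta$ on a fixed half-disc $B_{\rho_0}^+$ (alternatively $T$ may be produced by the Riemann mapping theorem together with the Kellogg--Warschawski boundary regularity theorem). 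With $R:=(-1,1)\times(0,1)\subset\overline{\R^2_+}$ one then sets
\begin{equation*}
\Phi:=(\delta r_0)\,T\big|_{\overline R},\qquad u:=v\circ\Phi,
\end{equation*}
which is conformal, sense-preserving, injective, of class $C^{6,\alpha}$, with $\Phi(0,0)=(0,0)$.

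For the geometric estimates: since $\|T-\mathrm{id}\|_{C^1}$ is small, $|DT|$ lies in a fixed compact subinterval of $(0,\infty)$ on $\overline R$, which gives \eqref{eq:gradPhi} with $c_0,C_0$ depending only on $M_0,\alpha$; because a $2\times2$ conformal matrix $A$ satisfies $|A|\,|A^{-1}|=2$, \eqref{eq:gradPhiInv} follows with exactly the constants $4/r_0$ and $4C_0/(c_0r_0)$. As $h(0)=0$ and $h$ has small gradient, $|T(y)-y|\le C\delta|y|$, so $T$ and hence $\Phi$ satisfy the two-sided distortion bound \eqref{eq:stimaPhi} with $K=K(M_0,\alpha)>8$. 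The bottom side $[-1,1]\times\{0\}$ lies on the real axis, so $T$ maps it onto a sub-arc of $\{x_2=\widehat g_\delta(x_1)\}$; since $(\delta r_0)(t,\widehat g_\delta(t))=(\delta r_0 t,\,g(\delta r_0 t))$, its $\Phi$-image is a sub-arc of $\Gamma_{r_0}$, which after a suitable normalization is the graph of $g$ over a symmetric interval $[-r_1,r_1]$, giving \eqref{eq:9.2b} and \eqref{eq:9.2a}. Finally $R$ contains a fixed half-disc around $0$ in $\overline{\R^2_+}$, so (bi-Lipschitz, near-identity) $T(R)\supset\{x_2>\widehat g_\delta(x_1)\}\cap B_{\rho_1}$ for a fixed $\rho_1$, hence $\Phi(R)\supset\Omega_{r_0}\cap B_{\rho_1\delta r_0}$, which is \eqref{eq:9.assente}.

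For the transformed equation: by conformality $\Delta_y(v\circ\Phi)=|\nabla\varphi|^2(\Delta_x v)\circ\Phi$, so writing $J:=|\nabla\varphi|^2$, applying $\Delta_y$ once more and using $\nabla_y\big((\Delta_x v)\circ\Phi\big)=(D\Phi)^T(\nabla_x\Delta_x v)\circ\Phi$ gives
\begin{equation*}
\Delta_y^2 u=J^2(\Delta_x^2 v)\circ\Phi+2\,\nabla_y J\cdot(D\Phi)^T(\nabla_x\Delta_x v)\circ\Phi+(\Delta_y J)(\Delta_x v)\circ\Phi .
\end{equation*}
Inserting \eqref{eq:equazione_piastra_non_div}, i.e.\ $\Delta_x^2 v=\widetilde a\cdot\nabla_x\Delta_x v+\widetilde q_2(v)$, and substituting $(\Delta_x v)\circ\Phi=J^{-1}\Delta_y u$ together with $(\nabla_x\Delta_x v)\circ\Phi=(D\Phi)^{-T}\big(J^{-1}\nabla_y\Delta_y u+(\nabla_y J^{-1})\Delta_y u\big)$, the coefficient of $\nabla_y\Delta_y u$ collects to $2J^{-1}\nabla_y J+J(D\Phi)^{-1}\widetilde a(\Phi)$, and since $2J^{-1}\nabla_y J=-2J\nabla_y(J^{-1})$ this is precisely $a\cdot\nabla_y\Delta_y u$ with $a$ as in the statement; all remaining terms, with $(\widetilde q_2 v)\circ\Phi$ rewritten in $y$-coordinates by the chain rule, assemble a second-order operator $q_2(u)=\sum_{|\alpha|\le 2}c_\alpha D^\alpha u$, yielding \eqref{eq:equazione_sol_composta}. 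The claims $a\in C^3$, $c_\alpha\in C^2$ and the bounds \eqref{eq:15.2} follow because $B,\nu\in C^4(\overline\Omega)$ (so $\widetilde a\in C^3$ and the coefficients of $\widetilde q_2$ are $C^2$, with norms controlled by $\alpha_0,\gamma_0,\Lambda_0$) while $\Phi$, and hence $D\Phi^{-1}$ and $|\nabla\varphi|^{\pm2}$, is $C^{6,\alpha}$ with norms controlled by $M_0,\alpha$; $u\in H^6(R)$ since $v\in H^6$ and $\Phi\in C^{6,\alpha}$. The conditions \eqref{eq:Dirichlet_sol_composta} hold because $v=\partial v/\partial n=0$ on $\Gamma_{r_0}$ forces the \emph{full} gradient $\nabla v$ to vanish there, so $u(y_1,0)=v(\Phi(y_1,0))=0$ and $u_{y_2}(y_1,0)=\nabla v(\Phi(y_1,0))\cdot\Phi_{y_2}(y_1,0)=0$.

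The delicate point is the quantitative conformal mapping step: controlling the flattening map up to and including the relevant portion of the boundary in $C^{6,\alpha}$, with every constant depending only on $M_0$ and $\alpha$, and in particular extracting the lower bound in the distortion estimate \eqref{eq:stimaPhi} from injectivity; by contrast, the transformation of the operator is a long but routine chain-rule bookkeeping. This proposition is proved in detail in \cite{l:ARV}.
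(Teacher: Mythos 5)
The paper does not actually prove this proposition: it is quoted verbatim from \cite{l:ARV}, to which the reader is referred, so there is no in-text argument to compare yours against line by line. Your reconstruction is nevertheless sound and consistent with the cited proof. The verification of the transformed equation is correct and complete: the conformal identity $\Delta_y(v\circ\Phi)=|\nabla\varphi|^2(\Delta_x v)\circ\Phi$, iterated and combined with \eqref{eq:equazione_piastra_non_div}, does produce exactly the vector $a$ in the statement (your cancellation $2J^{-1}\nabla J=-2J\nabla(J^{-1})$ is the right bookkeeping), the regularity count $a\in C^3$, $c_\alpha\in C^2$ matches the hypotheses $B,\nu\in C^4$ and $\Phi\in C^{6,\alpha}$, and the observation that the Dirichlet data force the full gradient of $v$ to vanish on $\Gamma_{r_0}$ correctly yields \eqref{eq:Dirichlet_sol_composta}. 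The identity $|A|\,|A^{-1}|=2$ for conformal $2\times2$ matrices reproduces the constants in \eqref{eq:gradPhiInv} exactly. For the construction of $\Phi$ itself you offer two routes: a rescaling that makes the graph $O(\delta)$ small followed by a near-identity holomorphic correction solved by contraction, or the Riemann mapping theorem plus Kellogg--Warschawski; the second is essentially what \cite{l:ARV} does (with a compactness argument to make the constants depend only on $M_0,\alpha$), while the first is a legitimate more hands-on alternative. Two points in your first route are thinner than the rest and would need care in a full write-up: the boundary condition $\mathrm{Im}\,h(t)=\widehat g_\delta(t+\mathrm{Re}\,h(t))$ involves the nonlocal harmonic-conjugate operator while $\widehat g_\delta$ is only defined on a bounded interval, so a genuine localization/extension step is needed before the contraction closes; and the exact symmetry of the interval $[-r_1,r_1]$ in \eqref{eq:9.2b} requires an explicit normalization rather than being automatic. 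Neither is a conceptual gap, and you flag both.
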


In order to simplify the notation, in the sequel of this section we rename $x,y$ the coordinates in $\R^2$.

Let $u\in H^6(B_1^+)$ be a solution to
\begin{equation}
    \label{eq:15.1a}
    \Delta^2 u= a\cdot \nabla\Delta u + q_2(u), \qquad\hbox{in
    } B_1^+,
\end{equation}
\begin{equation}
    \label{eq:15.1b}
    u(x,0)=u_y(x,0) =0, \quad \forall x\in (-1,1),
\end{equation}
with $q_2=\sum_{|\alpha|\leq 2}c_\alpha D^\alpha$,
\begin{equation}
    \label{eq:15.2_bis}
    \|a\|_{ C^3(\overline{B}_1^+, \R^2)}\leq M_1,\quad \|c_\alpha\|_{ C^2(\overline{B}_1^+)}\leq M_1,
\end{equation}
for some positive constant $M_1$.

Let us define the following extension of $u$ to $B_1$ (see \cite{l:Jo})
\begin{equation}
    \label{eq:16.1}
    \overline{u}(x,y)=\left\{
  \begin{array}{cc}
    u(x,y), & \hbox{ in } B_1^+,\\
    w(x,y), & \hbox{ in } B_1^-,
  \end{array}
\right.
\end{equation}
where
\begin{equation}
    \label{eq:16.2}
    w(x,y)= -[u(x,-y)+2yu_y(x,-y)+y^2\Delta u(x,-y)].
\end{equation}

We refer to \cite{l:ARV} for a proof of Propositions \ref{prop:16.1} and \ref{prop:19.2} below.

\begin{prop}
    \label{prop:16.1}
Let
\begin{equation}
    \label{eq:16.3}
F:=a\cdot \nabla\Delta u + q_2(u).
\end{equation}
Then $F\in H^2(B_1^+)$, $\overline{u}\in H^4(B_1)$,
\begin{equation}
    \label{eq:16.4}
\Delta^2 \overline{u} = \overline{F},\quad \hbox{ in } B_1,
\end{equation}
where
\begin{equation}
    \label{eq:16.5}
    \overline{F}(x,y)=\left\{
  \begin{array}{cc}
    F(x,y), & \hbox{ in } B_1^+,\\
    F_1(x,y), & \hbox{ in } B_1^-,
  \end{array}
\right.
\end{equation}
and
\begin{equation}
    \label{eq:16.6}
    F_1(x,y)= -[5F(x,-y)-6yF_y(x,-y)+y^2\Delta F(x,-y)].
\end{equation}
\end{prop}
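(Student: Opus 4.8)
\textbf{Proof plan for Proposition \ref{prop:16.1}.}

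The plan is to verify, in order, the three assertions: $F \in H^2(B_1^+)$, the identity $\Delta^2\overline u = \overline F$ in $B_1$, and the regularity $\overline u \in H^4(B_1)$. The first is immediate from the structure of $F$ in \eqref{eq:16.3}: since $u \in H^6(B_1^+)$, we have $\nabla \Delta u \in H^3(B_1^+)$ and $D^\alpha u \in H^4(B_1^+)$ for $|\alpha| \le 2$; combined with $a \in C^3(\overline B_1^+)$ and $c_\alpha \in C^2(\overline B_1^+)$ from \eqref{eq:15.2_bis}, the product rule gives $F \in H^2(B_1^+)$ with the obvious bound on $\|F\|_{H^2(B_1^+)}$ in terms of $\|u\|_{H^6(B_1^+)}$ and $M_1$. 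Note in particular this makes $\overline F$, defined via \eqref{eq:16.5}--\eqref{eq:16.6}, an $L^2(B_1)$ function, since $F_1$ in $B_1^-$ is a combination of $F(x,-y)$ and its derivatives up to order two, each in $L^2(B_1^-)$.

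Next I would establish the PDE and the matching of $\overline u$ and its normal derivatives across $\{y=0\}$. In $B_1^+$ the equation $\Delta^2\overline u = \overline F$ is just \eqref{eq:15.1a} rewritten via \eqref{eq:16.3}. In $B_1^-$ one computes $\Delta^2 w$ directly from \eqref{eq:16.2}: writing $\tilde u(x,y) = u(x,-y)$ (so $\Delta \tilde u = (\Delta u)(x,-y)$ and $\tilde u$ solves $\Delta^2 \tilde u = \tilde F$ with $\tilde F(x,y) = F(x,-y)$ in $B_1^-$), and noting that multiplication by $y$ and by $y^2$ shifts the biharmonic operator in a controlled way — using $\Delta^2(y \phi) = y\,\Delta^2\phi + 4\partial_y \Delta\phi$ and $\Delta^2(y^2\phi) = y^2\Delta^2\phi + 8y\,\partial_y\Delta\phi + 4\Delta\phi + 8\partial_y^2\phi$, together with $\Delta^2(y^2 \Delta\tilde u)$ requiring $\Delta(\Delta \tilde u) = \tilde F$ — one collects terms and checks that the result equals $F_1$ as given in \eqref{eq:16.6}. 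This is the computational heart of the proof; the coefficients $5$, $-6$, and $1$ in \eqref{eq:16.6} are exactly what makes this work, and one must be careful that the sign flip $y \mapsto -y$ in $\tilde u$ interacts correctly with the odd powers of $y$. Since the formula for $w$ in \eqref{eq:16.2} is precisely the John-type extension designed so that $\overline u \in C^3$ across the interface when $u$ is smooth, the traces of $\overline u, \partial_y \overline u, \partial_y^2 \overline u, \partial_y^3 \overline u$ from above and below agree on $(-1,1) \times \{0\}$: using \eqref{eq:15.1b} one checks $w(x,0) = -u(x,0) = 0 = u(x,0)$, $w_y(x,0) = 0 = u_y(x,0)$, and the second and third $y$-derivatives of $w$ at $y=0$ match those of $u$ by direct differentiation of \eqref{eq:16.2} and use of $u_y(x,0)=0$ and the equation.

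Finally, I would conclude $\overline u \in H^4(B_1)$. The function $w$ lies in $H^4(B_1^-)$ because each term in \eqref{eq:16.2} does: $u(x,-y) \in H^6$, $y u_y(x,-y) \in H^5$ (multiplication by the smooth function $y$ preserves $H^5$ on the bounded set $B_1^-$), and $y^2 \Delta u(x,-y) \in H^4$ since $\Delta u \in H^4(B_1^+)$. So $\overline u$ is piecewise $H^4$ on $B_1^+$ and $B_1^-$. The matching of traces up to third order across the flat interface $\{y=0\}$, established in the previous step, is exactly the compatibility condition needed for the glued function to belong to $H^4(B_1)$ globally — one invokes the standard characterization that a function which is $H^4$ on each side of a smooth hypersurface and whose traces of order $0,1,2,3$ coincide from both sides is $H^4$ across it (equivalently, test against $\varphi \in C_0^\infty(B_1)$ and integrate by parts, with no boundary contributions on $\{y=0\}$ precisely because of the trace matching). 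The resulting bound $\|\overline u\|_{H^4(B_1)} \le C \|u\|_{H^6(B_1^+)}$ follows by tracking constants. The main obstacle is the bookkeeping in the middle step: correctly expanding $\Delta^2 w$ and confirming it reproduces \eqref{eq:16.6}, together with verifying all four trace conditions; everything else is routine Sobolev-space algebra.
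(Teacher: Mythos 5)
Your plan is correct and follows the same route as the proof the paper defers to in \cite{l:ARV} (John's reflection): the identities $\Delta^2(y\phi)=y\Delta^2\phi+4\partial_y\Delta\phi$ and $\Delta^2(y^2\phi)=y^2\Delta^2\phi+8y\partial_y\Delta\phi+4\Delta\phi+8\partial_y^2\phi$ do produce the coefficients $5,-6,1$ of \eqref{eq:16.6} after the $y\mapsto -y$ sign bookkeeping, and the trace matching up to order three (which needs only $u=u_y=0$ on $\{y=0\}$, not the equation) justifies the global $H^4$ gluing. No gaps.
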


In the following proposition, we shall denote by $P_k$, for $k=2,3$, any differential operator of the form
$\sum_{|\alpha|\leq k}c_\alpha(\cdot)D^\alpha$,
with $\|c_\alpha\|_{L^\infty}\leq cM_1$, where $c$ is an absolute constant.

\begin{prop}
    \label{prop:19.2}
		For every $(x,y)\in B_1^-$, we have
\begin{equation}
    \label{eq:19.1}
    F_1(x,y)= H(x,y)+(P_2(w))(x,y)+(P_3(u))(x,-y),
\end{equation}
where
\begin{multline}
    \label{eq:19.2}
    H(x,y)= 6\frac{a_1}{y}(w_{yx}(x,y)+u_{yx}(x,-y))+\\
		+6\frac{a_2}{y}(-w_{yy}(x,y)+u_{yy}(x,-y))
		-\frac{12a_2}{y}u_{xx}(x,-y),
\end{multline}
where $a_1,a_2$ are the components of the vector $a$.
Moreover, for every $x\in (-1,1)$,
\begin{equation}
    \label{eq:23.1}
    w_{yx}(x,0)+u_{yx}(x,0)=0,
\end{equation}
\begin{equation}
    \label{eq:23.2}
    -w_{yy}(x,0)+u_{yy}(x,0)=0,
\end{equation}
\begin{equation}
    \label{eq:23.3}
    u_{xx}(x,0)=0.
\end{equation}
\end{prop}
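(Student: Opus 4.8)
The statement is established by a direct computation, starting from the identity \eqref{eq:16.6} of Proposition \ref{prop:16.1} and inserting the explicit form $F=a\cdot\nabla\Delta u+q_2(u)$. Writing \eqref{eq:16.6} out as $F_1(x,y)=-5F(x,-y)+6yF_y(x,-y)-y^2\Delta F(x,-y)$, the plan is to sort the resulting expression according to the order of differentiation applied to $u$ and the power of $y$ that multiplies it, so as to recover the three pieces in \eqref{eq:19.1}. The term $-5F(x,-y)$ is already a differential operator of order $\le 3$ in $u$, with coefficients bounded in terms of $M_1$ by \eqref{eq:15.2_bis}, evaluated at $(x,-y)$; it is therefore of the form $(P_3(u))(x,-y)$. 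The other two terms require the reflection structure and the equation.

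First I would treat $-y^2\Delta F(x,-y)$. A priori $\Delta F$ carries derivatives of $u$ of order five, but since $\Delta u$ enters $F$ only through $a\cdot\nabla\Delta u$ and $q_2(u)$, every order-five contribution appears in the combination $\Delta\partial_j\Delta u=\partial_j\Delta^2 u$, so using $\Delta^2 u=F$ in $B_1^+$ it collapses to a term of order four. Each remaining term then has the shape $y^2\,(\partial^2\Delta u)(x,-y)$, with coefficients built from $a$, $\nabla a$ and the $c_\alpha$'s (bounded thanks to \eqref{eq:15.2_bis}). Such terms are absorbed by differentiating the relation $y^2\Delta u(x,-y)=-w(x,y)-u(x,-y)-2y u_y(x,-y)$, which is \eqref{eq:16.2} rearranged: applying $\partial_x^2$, $\partial_x\partial_y$, $\partial_y^2$ expresses $y^2(\Delta u)_{xx}(x,-y)$, $y^2(\Delta u)_{xy}(x,-y)$, $y^2(\Delta u)_{yy}(x,-y)$ as a second-order derivative of $w$ at $(x,y)$ plus a differential expression of order $\le 3$ in $u$ at $(x,-y)$. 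Since the whole factor $y^2$ is carried along, no singular coefficient is produced, and this term contributes only to $(P_2(w))(x,y)+(P_3(u))(x,-y)$.

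The singular part $H$ comes entirely from $6yF_y(x,-y)$. Expanding $F_y$, its order-four part is $6y\big(a_1\Delta u_{xy}+a_2\Delta u_{yy}\big)(x,-y)$, all other summands being of order $\le 3$ in $u$ and, since $6y$ is bounded on $B_1^-$, of the form $(P_3(u))(x,-y)$. Now only a single power of $y$ is available, so after substituting the relations for $y^2(\Delta u)_{xy}(x,-y)$ and $y^2(\Delta u)_{yy}(x,-y)$ found above one must divide by $y$; carrying out the algebra (and using $\Delta u_x=u_{xxx}+u_{xyy}$, $\Delta u_y=u_{xxy}+u_{yyy}$ to simplify the order-$\le 3$ remainders) produces precisely $\frac{6a_1}{y}\big(w_{yx}(x,y)+u_{yx}(x,-y)\big)$, $\frac{6a_2}{y}\big(-w_{yy}(x,y)+u_{yy}(x,-y)\big)$ and $-\frac{12a_2}{y}u_{xx}(x,-y)$, i.e. $H$ as in \eqref{eq:19.2}, plus further $(P_3(u))(x,-y)$ terms. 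Adding the three contributions and gathering all order-$\le 3$ pieces (a sum of operators of the form $(P_3(u))(x,-y)$ is again of that form) gives \eqref{eq:19.1}.

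The compatibility identities are obtained by letting $y\to 0$ in the reflection formulas. Identity \eqref{eq:23.3} is immediate: differentiating $u(x,0)=0$ twice in $x$ gives $u_{xx}(x,0)=0$. From \eqref{eq:16.2} one computes $w_{xy}(x,0)=-u_{xy}(x,0)$, and $u_{xy}(x,0)=0$ because $u_y(x,0)=0$ for all $x$; hence $w_{yx}(x,0)+u_{yx}(x,0)=0$, which is \eqref{eq:23.1}. Likewise $w_{yy}(x,0)=u_{yy}(x,0)-2u_{xx}(x,0)$, which equals $u_{yy}(x,0)$ by \eqref{eq:23.3}, giving \eqref{eq:23.2}. (These are exactly the second-order matching conditions underlying $\overline u\in H^4(B_1)$; they also make each parenthesis in \eqref{eq:19.2} vanish at $y=0$, so that $H$ is in fact bounded despite the factors $1/y$.) I expect the bookkeeping to be the only real difficulty: one must track carefully the chain-rule signs from $\partial_y[f(x,-y)]=-f_y(x,-y)$, check that every genuine order-five term is removed by the biharmonic equation, and confirm that the sole uncancelled $1/y$ coefficients are $6a_1$, $6a_2$, $-12a_2$, with all other $y^{-1}$-type contributions recombining — via the relations for $w_{xx},w_{xy},w_{yy}$ — into the regular parts $(P_2(w))(x,y)$ and $(P_3(u))(x,-y)$.
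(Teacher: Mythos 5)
Your proof is correct and follows essentially the same route as the original argument: the paper does not reprove Proposition \ref{prop:19.2} but defers to \cite{l:ARV}, where the identity is obtained by exactly this bookkeeping — substituting $F=a\cdot\nabla\Delta u+q_2(u)$ into \eqref{eq:16.6}, removing the fifth-order terms via $\Delta^2u=F$, and converting the surviving $\partial_i\partial_j\Delta u$ terms through the differentiated reflection formula $y^2\Delta u(x,-y)=-w(x,y)-u(x,-y)-2yu_y(x,-y)$, which is what produces the three $1/y$ coefficients $6a_1$, $6a_2$, $-12a_2$. The trace identities \eqref{eq:23.1}--\eqref{eq:23.3} follow, as you say, from the Dirichlet conditions and \eqref{eq:16.2} evaluated at $y=0$.
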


We shall also use the following Hardy's inequality (\cite[\S 7.3, p. 175]{l:HLP34}), for a proof see also \cite{l:T67}.

\begin{prop} [{\bf Hardy's inequality}]
    \label{prop:Hardy}
		Let $f$ be an absolutely continuous function defined in $[0,+\infty)$, such that
		$f(0)=0$. Then
\begin{equation}
    \label{eq:24.1}
		\int_0^{+\infty} \frac{f^2(s)}{s^2}ds\leq 4 \int_0^{+\infty} (f'(s))^2ds.
\end{equation}
\end{prop}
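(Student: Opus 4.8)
This is the classical Hardy inequality, and the plan is the standard integration-by-parts argument, the only care being the treatment of the boundary terms. First I would set $J^2:=\int_0^{+\infty}(f'(s))^2\,ds$; if $J=+\infty$ there is nothing to prove, so I may assume $J<+\infty$, hence $f'\in L^2(0,+\infty)$. Since $f(0)=0$ and $f$ is absolutely continuous, $f(s)=\int_0^s f'(t)\,dt$. All integrals over $(0,+\infty)$ will be produced as limits of integrals over truncated intervals $[\varepsilon,R]$, $0<\varepsilon<R<+\infty$, on which the integrand $f^2(s)/s^2$ is bounded (because $f$ is bounded on $[\varepsilon,R]$ and $s\ge\varepsilon$), so I would argue on $[\varepsilon,R]$ first and then send $\varepsilon\to0^+$, $R\to+\infty$ using monotone convergence.

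\emph{Step 1 (integration by parts).} On $[\varepsilon,R]$, integrating by parts with the $C^1$ primitive $-1/s$ of $s^{-2}$ and $(f^2)'=2ff'$ gives
\[
\int_\varepsilon^R\frac{f^2(s)}{s^2}\,ds=\frac{f^2(\varepsilon)}{\varepsilon}-\frac{f^2(R)}{R}+2\int_\varepsilon^R\frac{f(s)f'(s)}{s}\,ds .
\]
The term $-f^2(R)/R$ is $\le0$ and will simply be discarded. For the term at $\varepsilon$, Cauchy--Schwarz and $f(\varepsilon)=\int_0^\varepsilon f'$ give $f^2(\varepsilon)/\varepsilon\le\int_0^\varepsilon(f'(t))^2\,dt=:\delta_\varepsilon$, and $\delta_\varepsilon\to0$ as $\varepsilon\to0^+$ since $f'\in L^2$.

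\emph{Step 2 (closing the estimate).} Writing $I_{\varepsilon,R}:=\int_\varepsilon^R f^2(s)s^{-2}\,ds<+\infty$, the cleanest way to conclude is to expand the nonnegative quantity
\[
0\le\int_\varepsilon^R\Big(2f'(s)-\tfrac{f(s)}{s}\Big)^2\,ds=4\int_\varepsilon^R(f')^2-4\int_\varepsilon^R\frac{ff'}{s}+I_{\varepsilon,R},
\]
and substitute $2\int_\varepsilon^R\frac{ff'}{s}=I_{\varepsilon,R}-\frac{f^2(\varepsilon)}{\varepsilon}+\frac{f^2(R)}{R}$ from Step 1; this yields $I_{\varepsilon,R}\le4\int_\varepsilon^R(f')^2+\frac{2f^2(\varepsilon)}{\varepsilon}-\frac{2f^2(R)}{R}\le4J^2+2\delta_\varepsilon$. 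Letting $\varepsilon\to0^+$ and $R\to+\infty$, monotone convergence gives $\int_0^{+\infty}f^2(s)s^{-2}\,ds\le4J^2$, which is \eqref{eq:24.1}. (Equivalently, one may bound $2\int_\varepsilon^R ff's^{-1}$ directly by $2I_{\varepsilon,R}^{1/2}J$ via Cauchy--Schwarz, obtaining $I_{\varepsilon,R}\le\delta_\varepsilon+2JI_{\varepsilon,R}^{1/2}$, a quadratic inequality in $I_{\varepsilon,R}^{1/2}$ that again yields the constant $4$.)

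\emph{Expected difficulty.} There is no real obstacle here; the proof is elementary. The only points requiring attention are the legitimacy of the integration by parts on $[\varepsilon,R]$ (immediate, as $f$ is absolutely continuous and $f'$ integrable there) and the handling of the two boundary contributions — the one at $R$ having a favorable sign, the one at $\varepsilon$ vanishing in the limit thanks to $f(0)=0$ together with $f'\in L^2$.
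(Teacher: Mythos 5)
Your proof is correct. Note, however, that the paper itself does not prove Proposition \ref{prop:Hardy}: it is quoted as a classical result from \cite{l:HLP34} (\S 7.3), with \cite{l:T67} cited for a proof, so there is no in-paper argument to compare yours against. What you wrote is the standard self-contained proof of the $p=2$ Hardy inequality with the sharp constant $4$: the truncation to $[\varepsilon,R]$ is legitimate, the integration by parts is valid since $f$ is absolutely continuous there, the boundary term at $R$ is discarded by sign, and the term at $\varepsilon$ is correctly shown to vanish using $f(\varepsilon)=\int_0^\varepsilon f'$, Cauchy--Schwarz and $f'\in L^2$ (if $\int_0^{+\infty}(f')^2=+\infty$ there is nothing to prove, as you say). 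Both of your closing variants --- expanding $\int\bigl(2f'-f/s\bigr)^2\ge 0$, or the quadratic inequality in $I_{\varepsilon,R}^{1/2}$ --- give the stated constant, and the passage to the limit by monotone convergence is sound. So your contribution is a complete elementary proof of a statement the authors import from the literature; it is consistent with, and in effect reconstructs, the classical argument behind the cited references.
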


Another basic ingredient for our proof of the doubling inequality at the boundary is the following Carleman estimate, whose proof is postponed in the Appendix.

\begin{prop} [{\bf Carleman estimate}]
    \label{prop:Carleman}
		Let us define
\begin{equation}
    \label{eq:24.2}
		\rho(x,y) = \phi\left(\sqrt{x^2+y^2}\right),
\end{equation}
where
\begin{equation}
    \label{eq:24.3}
		\phi(s) = \frac{s}{\left(1+\sqrt{s}\right)^2}.
\end{equation}
Then there exist absolute constants $\overline{\tau}>1$, $C>1$ such that
\begin{gather}
    \label{eq:24.4}
	\tau^4r^2\int\rho^{-2-2\tau}|U|^2dxdy +\sum_{k=0}^3 \tau^{6-2k}\int\rho^{2k+1-2\tau}|D^kU|^2dxdy \\\nonumber  \leq C
	\int\rho^{8-2\tau}(\Delta^2 U)^2dxdy,
\end{gather}
for every $\tau\geq \overline{\tau}$, for every $r\in (0, 1)$ and for every $U\in C^\infty_0(B_{1}\setminus\ \overline{B}_{r/4})$.
\end{prop}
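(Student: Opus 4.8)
The plan is to reduce \eqref{eq:24.4} to a weighted one-dimensional estimate by passing to conformal polar coordinates, and then to run the classical Carleman scheme --- conjugation, splitting into self-adjoint and skew-adjoint parts, commutator/pseudoconvexity estimate --- while keeping track of the extra term coming from the support hypothesis.

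First I would introduce $(x,y)=\varrho(\cos\theta,\sin\theta)$ and $t=-\log\varrho$, identifying $B_1\setminus\{0\}$ with the half-cylinder $\R\times S^1$; a function $U\in C^\infty_0(B_1\setminus\overline B_{r/4})$ becomes a function $\widetilde U$ compactly supported in $(0,T)\times S^1$ with $T=\log(4/r)$, and one has the classical identity $\varrho^4\Delta^2=[(\partial_t+2)^2+\partial_\theta^2][\partial_t^2+\partial_\theta^2]$, which is constant-coefficient. Moreover $\rho=\phi(\varrho)=e^{-\sigma(t)}$ with $\sigma(t)=t+2\log(1+e^{-t/2})$, so the weight $\rho^{-\tau}$ turns into $e^{\tau\sigma(t)}$, and the structural facts I would record are $\sigma'=(1+e^{-t/2})^{-1}\in(\tfrac12,1)$, $\sigma''=\tfrac12\sigma'(1-\sigma')>0$ and $\sigma'''<0$ for $t>0$: the weight exponent is strictly convex, but only weakly, $\sigma''$ decaying like $e^{-t/2}$. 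Expanding $\widetilde U$ in angular modes $e^{in\theta}$ (the weight and $\Delta^2$ are diagonal in this basis) reduces matters to a weighted estimate for $\mathcal L_n:=[(\partial_t+2)^2-n^2][\partial_t^2-n^2]$, uniform in $n\in\Z$. Putting $v=e^{\tau\sigma}\widetilde U_n$ (with a fixed power renormalisation making the conjugated operator symmetric with respect to $dt$) and $P_\tau=e^{\tau\sigma}\mathcal L_n e^{-\tau\sigma}=S_\tau+A_\tau$ (formally self-adjoint plus skew-adjoint parts), compact support gives
\begin{equation*}
\int|P_\tau v|^2\,dt=\int|S_\tau v|^2\,dt+\int|A_\tau v|^2\,dt+\int([S_\tau,A_\tau]v)\,\overline v\,dt .
\end{equation*}

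The heart of the proof is to bound the right-hand side from below by $c\sum_{k=0}^3\tau^{6-2k}\int e^{2\tau\sigma}(|\partial_t^k v|^2+n^{2k}|v|^2)\,dt$. The symbol of $\mathcal L_n$ has all its zeros on the real axis (at $n,-n,n-2,-n-2$), so after conjugation the only obstruction to ellipticity of $P_\tau$ is the family of resonant modes $n\simeq\tau\sigma'(t)$; there $\|S_\tau v\|^2+\|A_\tau v\|^2$ degenerates, but since $\sigma'$ is strictly monotone these resonances are crossed transversally, and the commutator $[S_\tau,A_\tau]$ --- a lower-order operator whose quadratic form is nonnegative because $\sigma''>0$ --- is non-degenerate precisely there and supplies the missing subelliptic gain with the sharp powers of $\tau$. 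All terms carrying an extra factor $\sigma''$ or $\sigma'''$ are $O(e^{-t/2})$ and are absorbed once $\tau\ge\overline\tau$. Summing over $n$ and undoing the change of variables --- a Euclidean derivative $D^k$ corresponding, up to $\varrho^{-k}$ and lower order, to $t$- and $\theta$-derivatives, with $\varrho\simeq\rho$ --- yields $\sum_{k=0}^3\tau^{6-2k}\int\rho^{2k+1-2\tau}|D^kU|^2\,dxdy\le C\int\rho^{8-2\tau}(\Delta^2U)^2\,dxdy$.

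It remains to produce the first term $\tau^4r^2\int\rho^{-2-2\tau}|U|^2\,dxdy$, which is the new ingredient with respect to the Carleman estimate of \cite{l:ARV}: it is not controlled by the $k=0$ term when $r\lesssim\tau^{-2}$, and it is extracted from the support condition $\mathrm{supp}\,U\subset\{\varrho>r/4\}$. Since $U(\cdot,\theta)$ vanishes on $\{\varrho\le r/4\}$, Hardy's inequality (Proposition \ref{prop:Hardy}) applied in the radial variable, with origin translated to $\varrho=r/4$, together with the $k=0,1$ terms already controlled and the positive $\sigma''$-contribution retained in the commutator (which the pure power weight does not produce), gives $\tau^4r^2\int\rho^{-2-2\tau}|U|^2\le C\int\rho^{8-2\tau}(\Delta^2U)^2$. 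Adding this to the estimate above and fixing $\overline\tau$ large enough, as an absolute constant, to absorb all error terms, yields \eqref{eq:24.4}. I expect the main obstacle to be the low-order analysis at the resonant modes together with the extraction of this last term: one must show that $[S_\tau,A_\tau]$, completed by $\|S_\tau v\|^2+\|A_\tau v\|^2$, controls all four derivative levels with exactly the powers $\tau^{6-2k}$ --- which hinges on the special algebra of $\phi$ (namely $\sigma'\in(\tfrac12,1)$ and $\sigma''=\tfrac12\sigma'(1-\sigma')>0$) --- and that the $r$-dependent term is obtained uniformly in $r$; the bookkeeping needed to keep all the $\sigma''$- and $\sigma'''$-generated errors negligible uniformly in $n$ and $r$ is the laborious part carried out in the Appendix.
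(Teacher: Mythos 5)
Your overall framework --- conformal polar/cylinder coordinates, conjugation by the weight, splitting into symmetric and antisymmetric parts, and Hardy's inequality combined with the support condition $\mathrm{supp}\,U\subset\{\varrho>r/4\}$ to extract the Bakri-type term $\tau^4r^2\int\rho^{-2-2\tau}|U|^2$ --- is the right one and matches the paper's in spirit. But you propose to conjugate and decompose the \emph{full fourth-order} operator $[(\partial_t\pm2)^2+\partial_\theta^2][\partial_t^2+\partial_\theta^2]$ and to obtain all four derivative levels with the powers $\tau^{6-2k}$ from a single commutator estimate at the resonant modes $n\simeq\tau\sigma'(t)$. That is precisely the step the paper avoids, and it is a genuine gap as stated: for a fourth-order conjugated operator the commutator $[S_\tau,A_\tau]$ is a sixth-order form with many terms of competing signs, and the assertion that it is ``nonnegative because $\sigma''>0$'' and yields exactly $\tau^{6-2k}$ for every $k=0,\dots,3$ uniformly in $n$ is not a computation you have done nor one that follows from the second-order intuition; the characteristic variety where both $S_\tau$ and $A_\tau$ degenerate is larger and the required subelliptic gain at each derivative level does not come out of one scalar positivity. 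The paper instead (i) proves a Carleman estimate for $\Delta$ alone with the family of weights $\phi_\varepsilon(s)=s(1+s^\varepsilon)^{-1/\varepsilon}$, where the $S/A$ splitting and the commutator computation are elementary and explicit, (ii) iterates it twice, applying it first to $u=\Delta U$ with parameter $\tau-2$ and then to $U$, which composes the two three-term estimates into the $k=0,1$ terms of \eqref{eq:24.4} and, crucially, composes the two $r$-gaining terms $\tau^2r\int\rho^{-1-2\tau}(\cdot)^2$ into $\tau^4r^2\int\rho^{-2-2\tau}|U|^2$, and (iii) recovers the $k=2,3$ terms not from the commutator at all but from Rellich-type integration-by-parts identities (Lemma \ref{formule}) that convert $\int\zeta|D^2U|^2$ and $\int\zeta|D^3U|^2$ into $\int\zeta|\Delta U|^2$, $\int\zeta|\Delta U||\Delta^2U|$ plus errors controlled by $|\nabla\zeta|,|D^2\zeta|\lesssim\tau\rho^{-1}\zeta,\tau^2\rho^{-2}\zeta$.

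Two smaller points. First, your attribution of the $\tau^4r^2$ term to ``the positive $\sigma''$-contribution retained in the commutator'' is off: in the paper the convexity $\sigma''>0$ produces the $\tau^3\int e^{\varepsilon t}f^2$ and $\tau\int e^{\varepsilon t}(f'^2+f_\theta^2)$ terms, while the $r$-term comes from keeping the square of the antisymmetric part, $\int\gamma|\mathcal A_\tau f|^2\gtrsim\tau^2\int f'^2-(\text{error})$, and then applying Hardy in the radial variable $s=e^t$ (origin at $s=0$, not translated to $r/4$) together with $e^{-t}\le 4/r$ on the support to get $\int f^2e^{-t}\le\frac{16}{r}\int f'^2$. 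Second, your reduction to individual Fourier modes $e^{in\theta}$ requires the commutator positivity uniformly in $n$, including the non-resonant but low modes where the fourth-order symbol has the double degeneracy $n\approx0$, $n\approx-2$; none of this bookkeeping appears in your sketch. So the route could in principle be completed (it is closer to the direct approaches for $\Delta^m$ in the literature), but as written the central subelliptic estimate is asserted rather than proved, whereas the paper's factorization reduces everything to second-order computations that are carried out in full.
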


\begin{rem}
   \label{rem:stima_rho}
	Let us notice that
	\begin{equation*}
\frac{s}{4}\leq \varphi(s)\leq s, \quad \forall s\leq 1,
\end{equation*}
\begin{equation}
    \label{eq:stima_rho}
	\frac{\sqrt{x^2+y^2}}{4}\leq \rho(x,y)\leq \sqrt{x^2+y^2}, \quad \forall (x,y)\in B_1.
\end{equation}	
	
\end{rem}

We shall need also the following results.

\begin{lem} [Interpolation estimates]
    \label{lem:Agmon}
Let $0<\epsilon\leq 1$ and $m\in \N$, $m\geq 2$. For any $j=1,\cdots, m-1$ there exists an absolute constant
$C_{m,j}$ such that for every $v\in H^m(B_r^+)$,
\begin{equation}
    \label{eq:3a.2}
	r^j\|D^jv\|_{L^2(B_r^+)}\leq C_{m,j}\left(\epsilon r^m\|D^mv\|_{L^2(B_r^+)}
	+\epsilon^{-\frac{j}{m-j}}\|v\|_{L^2(B_r^+)}\right).
\end{equation}	
\end{lem}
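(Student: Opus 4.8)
The statement to prove is the interpolation inequality of Lemma \ref{lem:Agmon} (Agmon-type). Here is a plan.

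\medskip

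\textbf{Strategy.} The plan is to first establish the estimate on a fixed reference domain, the unit hemidisc $B_1^+$, and then recover the general scaling in $r$ by a dilation argument. Set $w(z) = v(rz)$ for $z \in B_1^+$, so that $D^j w(z) = r^j (D^j v)(rz)$ and $\|D^j w\|_{L^2(B_1^+)} = r^{j-1}\|D^j v\|_{L^2(B_r^+)}$ (the extra factor $r^{-1}$ from the measure change is harmless since it appears uniformly on both sides once we multiply through). Thus it suffices to prove, for $w \in H^m(B_1^+)$,
\begin{equation*}
\|D^j w\|_{L^2(B_1^+)} \leq C_{m,j}\left(\epsilon \|D^m w\|_{L^2(B_1^+)} + \epsilon^{-j/(m-j)}\|w\|_{L^2(B_1^+)}\right), \qquad j=1,\dots,m-1.
\end{equation*}

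\medskip

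\textbf{Key steps.} The first step is the borderline-free Gagliardo--Nirenberg interpolation on $B_1^+$: there is an absolute constant $A_{m,j}$ with
\begin{equation*}
\|D^j w\|_{L^2(B_1^+)} \leq A_{m,j}\, \|D^m w\|_{L^2(B_1^+)}^{j/m}\, \|w\|_{L^2(B_1^+)}^{1-j/m} + A_{m,j}\,\|w\|_{L^2(B_1^+)}.
\end{equation*}
This holds on any bounded Lipschitz domain (and $B_1^+$ is Lipschitz) by the standard Gagliardo--Nirenberg--Sobolev theory; one can also prove it directly by an extension operator $E\colon H^m(B_1^+)\to H^m(\R^2)$ followed by the Fourier-space multiplicative inequality $\|D^j(Ew)\|_{L^2}\le C\|D^m(Ew)\|^{j/m}_{L^2}\|Ew\|^{1-j/m}_{L^2}$ and the boundedness $\|D^\ell(Ew)\|_{L^2(\R^2)}\le C\|w\|_{H^\ell(B_1^+)}$ for $0\le\ell\le m$, absorbing the lower-order terms on the right. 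The second step is Young's inequality with the exponents $p = m/j$ and $p' = m/(m-j)$: for any $\delta>0$,
\begin{equation*}
\|D^m w\|^{j/m}\,\|w\|^{1-j/m} \leq \frac{j}{m}\,\delta^{m/j}\,\|D^m w\| + \frac{m-j}{m}\,\delta^{-m/(m-j)}\,\|w\|.
\end{equation*}
Choosing $\delta$ so that $\delta^{m/j} = \epsilon / A_{m,j}$, i.e.\ $\delta = (\epsilon/A_{m,j})^{j/m}$, gives a coefficient of order $\epsilon$ on the $\|D^m w\|$ term and a coefficient of order $\epsilon^{-j/(m-j)}$ on the $\|w\|$ term (the additional $A_{m,j}\|w\|$ from the first step is absorbed into the latter since $\epsilon^{-j/(m-j)}\ge 1$ for $\epsilon\le 1$). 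Relabelling constants to a single absolute $C_{m,j}$ depending only on $m,j$ finishes the reference-domain estimate. The third and final step is to undo the scaling: substituting $w(z)=v(rz)$, the inequality on $B_1^+$ becomes exactly \eqref{eq:3a.2} after multiplying through by $r$ and noting that $r^j\|D^jv\|_{L^2(B_r^+)}=\|D^jw\|_{L^2(B_1^+)}$, $r^m\|D^mv\|_{L^2(B_r^+)}=\|D^mw\|_{L^2(B_1^+)}$, and $\|v\|_{L^2(B_r^+)}=\|w\|_{L^2(B_1^+)}$ up to the same uniform power of $r$ on both sides.

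\medskip

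\textbf{Main obstacle.} The only genuine content is the scale-invariant Gagliardo--Nirenberg inequality on the hemidisc together with the correct bookkeeping of the $\epsilon$-powers; the geometry of $B_1^+$ enters only through the existence of a bounded Sobolev extension operator, which is classical for Lipschitz domains, so there is no real difficulty there. The point requiring a little care is ensuring the constant is truly \emph{absolute} (independent of $\epsilon$ and $r$): this is exactly why one isolates the dimensionless estimate on $B_1^+$ first, lets Young's inequality produce the $\epsilon$-dependence explicitly, and only then rescales — the scaling introduces no new constant because both sides of \eqref{eq:3a.2} are homogeneous of the same degree under $v \mapsto v(r\,\cdot\,)$.
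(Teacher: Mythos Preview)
Your argument is correct: reduce to the unit hemidisc by scaling, invoke the multiplicative Gagliardo--Nirenberg inequality there (valid since $B_1^+$ is Lipschitz and admits a bounded $H^m$-extension), split the product with Young's inequality using exponents $m/j$ and $m/(m-j)$ to generate the $\epsilon$/$\epsilon^{-j/(m-j)}$ pair, and rescale back. The only cosmetic slip is in the final sentence, where you write $r^j\|D^jv\|_{L^2(B_r^+)}=\|D^jw\|_{L^2(B_1^+)}$ without the Jacobian factor; but you had already recorded the correct relation $\|D^jw\|_{L^2(B_1^+)}=r^{j-1}\|D^jv\|_{L^2(B_r^+)}$ earlier and noted that the common factor of $r^{-1}$ cancels across the inequality, so the argument goes through.

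As for comparison: the paper does not prove this lemma at all---it simply cites Agmon's book \cite[Theorem~3.3]{l:a65}. Agmon's proof proceeds somewhat differently, building the estimate from an elementary one-dimensional interpolation inequality on an interval (via integration by parts and a covering argument) and then passing to higher dimensions by applying the 1D estimate along each coordinate direction; this avoids invoking an extension operator or the full-space Fourier inequality. Your route through extension plus multiplicative Gagliardo--Nirenberg plus Young is equally standard and arguably more streamlined, at the cost of quoting slightly heavier machinery. Either way the constant $C_{m,j}$ is absolute, which is all that is needed downstream.
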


See \cite[Theorem 3.3]{l:a65}.

\begin{lem}[Caccioppoli-type inequality]
    \label{lem:intermezzo}
		Let $u\in H^6(B_1^+)$ be a solution to \eqref{eq:15.1a}--\eqref{eq:15.1b}, with $a$ and $q_2$ satisfying
		\eqref{eq:15.2_bis}. For every $r$, $0<r<1$, we have
		\begin{equation}
    \label{eq:12a.2}
	\|D^hu\|_{L^2(B_{\frac{r}{2}}^+)}\leq \frac{C}{r^h}\|u\|_{L^2(B_r^+)}, \quad \forall
	h=1, ..., 6,
\end{equation}	
where $C$ is a constant only depending on $\alpha_0$, $\gamma_0$ and $\Lambda_0$.
\end{lem}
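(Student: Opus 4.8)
The plan is to prove the Caccioppoli-type inequality \eqref{eq:12a.2} by the classical interior-estimate strategy, working directly on the half-ball and using the boundary conditions \eqref{eq:15.1b} to make everything go through up to the flat piece of the boundary. The point is that $u$ solves a fourth-order elliptic equation $\Delta^2 u = a\cdot\nabla\Delta u + q_2(u)$ whose right-hand side involves at most third-order derivatives of $u$ with bounded coefficients, so testing the equation against $\eta^{2N}u$ for a suitable cut-off $\eta$ and integrating by parts will let us absorb the lower-order terms and bootstrap.

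Here is the order I would carry it out. First, fix radii $\frac r2\le r'<r''\le r$ and a cut-off $\eta\in C^\infty_0(B_{r''})$ with $\eta\equiv 1$ on $B_{r'}$, $0\le\eta\le1$, and $|D^j\eta|\le C(r''-r')^{-j}$, chosen so that $\eta$ depends only on the variable $\sqrt{x^2+y^2}$ — in particular $\eta$ and all its normal derivatives do \emph{not} vanish on $\{y=0\}$, but that is harmless because it is $u$, not $\eta$, that carries the Dirichlet data. Multiply \eqref{eq:15.1a} by $\eta^8 u$ and integrate over $B_r^+$. Integrating by parts twice to move two Laplacians off $\Delta^2 u$ onto $\eta^8 u$, every boundary integral over $\{y=0\}$ that arises contains either $u$ or $u_y$ evaluated on $y=0$, which vanish by \eqref{eq:15.1b}; the boundary integrals over the spherical part vanish because $\eta$ is compactly supported there. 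This produces a term $\int_{B_r^+}\eta^8(\Delta u)^2$ plus commutator terms in which derivatives land on $\eta$ (hence are multiplied by negative powers of $r''-r'$ and by lower-order derivatives of $u$), plus the contribution of the right-hand side $\int_{B_r^+}\eta^8 u\,(a\cdot\nabla\Delta u + q_2(u))$, again containing at most third derivatives of $u$. Using Cauchy–Schwarz with a small parameter and the interpolation inequality \eqref{eq:3a.2} of Lemma \ref{lem:Agmon} (applied on $B_{r''}^+$ with $m$ large enough), every term involving $D^ju$ with $1\le j\le 3$ is dominated by $\epsilon$ times $\|D^4 u\|_{L^2}^2$ plus $C_\epsilon (r''-r')^{-8}\|u\|_{L^2(B_r^+)}^2$; and $\|D^4u\|_{L^2}^2$ over the relevant region is in turn controlled, again by \eqref{eq:3a.2} and elliptic regularity, by $\|\Delta^2 u\|_{L^2}^2$ plus lower-order terms, so one closes the estimate for $h\le 4$ first on a slightly smaller ball. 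Then I would differentiate the equation (it still has the same structure with coefficients one derivative less regular, which is why $a\in C^3$, $c_\alpha\in C^2$ is assumed, giving room up to $h=6$) and iterate the same scheme with nested radii $\frac r2=r_0'<r_1'<\dots<r_6'=r$ to reach $h=5,6$, re-scaling in $r$ by a standard covering/dilation argument so that the powers of $r$ in \eqref{eq:12a.2} come out correctly and the final constant depends only on the ellipticity and regularity constants $\alpha_0,\gamma_0,\Lambda_0$.

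A cleaner equivalent route, which I would actually prefer to write, is to first establish the $L^2$ Caccioppoli bound for the pure bi-Laplacian together with the lower-order perturbation in the scale-invariant form
\begin{equation*}
\sum_{j=0}^{4} r^{2j-8}\int_{B_{\theta r}^+}|D^j u|^2 \;\le\; C\,\Big(r^{-8}\int_{B_r^+}|u|^2 + \int_{B_r^+}|a\cdot\nabla\Delta u + q_2(u)|^2\,\eta^8\Big),
\end{equation*}
and then feed the right-hand side back using \eqref{eq:3a.2} to absorb $\nabla\Delta u$ and $q_2(u)$ into the left-hand side with a small coefficient; the higher-order cases $h=5,6$ follow by applying this to $\partial_x^\beta u$ after checking that the differentiated boundary conditions $\partial_x^\beta u(x,0)=\partial_x^\beta u_y(x,0)=0$ still hold (they do, since tangential differentiation preserves the Dirichlet data) and that the differentiated equation has the same form with admissible coefficients.

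The main obstacle — and the only genuinely delicate point — is bookkeeping at the flat boundary $\{y=0\}$: one must verify that after each integration by parts the boundary terms on $\{y=0\}$ really do vanish, which requires being careful that in every such term at least one factor is $u|_{y=0}$ or $u_y|_{y=0}$ (and not, say, $u_{yy}|_{y=0}$ or $\eta_y|_{y=0}$, which need not vanish). With the standard choice of test function $\eta^8 u$ this works because each integration by parts in $y$ that hits $\Delta^2 u$ lowers the $y$-order on the $u$-factor by at most one step at a time, so the boundary term produced at the first step has a bare $u$ and at the second step a bare $u_y$; no boundary term with two or more $y$-derivatives on $u$ is ever generated. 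Once this is laid out carefully, the rest is the routine interpolation-and-absorption argument, and the dependence of the constant only on $\alpha_0,\gamma_0,\Lambda_0$ follows because these control the coefficients $a$, $q_2$ via \eqref{eq:3.Lame_convex}, \eqref{eq:C4Lame} and the formulas \eqref{eq:vettore_a_tilde}--\eqref{eq:q_2}.
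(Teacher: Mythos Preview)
The paper does not actually prove this lemma; it simply cites \cite[Lemma 4.7]{l:ARV}. So there is no in-paper argument to compare against, only the reference. Your sketch is the standard Caccioppoli/interior-regularity argument and is, in outline, the same route taken in \cite{l:ARV}: test against $\eta^{2N}u$, integrate by parts using the Dirichlet data \eqref{eq:15.1b} to kill the boundary terms on $\{y=0\}$, absorb lower-order terms via interpolation (Lemma~\ref{lem:Agmon}), and iterate over nested radii. Your boundary-term analysis is correct: with $v=\eta^8u$ one has $v=v_y=0$ on $\{y=0\}$, so both boundary contributions from $\int\Delta^2u\,v=\int\Delta u\,\Delta v+\text{(bdry)}$ vanish.

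One point you gloss over and should make explicit for $h=5,6$: applying the argument to the tangentially differentiated function $\partial_x^\beta u$ controls only those derivatives of order $5$ or $6$ with at most four $y$-differentiations. The purely (or predominantly) normal derivatives $\partial_y^5u$, $\partial_y^6u$ must then be recovered algebraically from the equation $\partial_y^4u=-2\partial_x^2\partial_y^2u-\partial_x^4u+a\cdot\nabla\Delta u+q_2(u)$, differentiated as needed; this is routine but is the step that actually uses the additional regularity $a\in C^3$, $c_\alpha\in C^2$. Also note that in the half-ball formulation the constant naturally depends on the bound $M_1$ in \eqref{eq:15.2_bis}; the dependence stated in the lemma on $\alpha_0,\gamma_0,\Lambda_0$ (rather than also on $M_0,\alpha$) reflects how the result is phrased in \cite{l:ARV} before the conformal flattening, so do not be surprised if your proof produces a constant $C=C(M_1)$.
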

See \cite[Lemma 4.7]{l:ARV}.

\section{Proof of the main theorem} \label{sec:doubling}

\begin{lem}\label{newlemma1}
Let $u\in H^6(B_1^+)$ be a solution to \eqref{eq:15.1a}--\eqref{eq:15.1b}. There exists a positive number $\overline{R}_0\in (0,1)$, depending on $M_1$ only, such that, for every $R$ and for every $r$ such that $0<2r<R<\frac{\overline{R}_0}{2}$, we have
\begin{gather}
    \label{eq:37.1bis}
    R(2r)^{-2\tau}\int_{B^+_{2r}}|u|^2+R^{1-2\tau}\int_{B_{R}^+}
	|u|^2\leq\\ \nonumber
	\leq C (M_1^2+1)\left[\left(\frac{r}{4}\right)^{-2\tau}\int_{B_r^+}|u|^2+
	\left(\frac{\overline{R}_0}{2}\right)^{-2\tau}\int_{B_{\overline{R}_0}^+}|u|^2
	\right],
\end{gather}
for every $\tau\geq \widetilde{\tau}$, with $\widetilde{\tau}, C$ positive
absolute constants.
\end{lem}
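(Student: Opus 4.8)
The plan is to apply the Carleman estimate of Proposition \ref{prop:Carleman} to the reflected solution $\overline{u}$ defined in \eqref{eq:16.1}--\eqref{eq:16.2}, after multiplying by a suitable cutoff. First I would fix a smooth radial cutoff $\zeta=\zeta(|(x,y)|)$ with $\zeta\equiv 1$ on the annulus $B_R\setminus B_{r/2}$, $\zeta\equiv 0$ outside $B_{\overline{R}_0}\setminus B_{r/4}$, and with the standard derivative bounds $|D^k\zeta|\le C r^{-k}$ near the inner radius and $|D^k\zeta|\le C\overline{R}_0^{-k}$ near the outer radius. Then $U:=\zeta\overline{u}\in C^\infty_0(B_1\setminus\overline{B}_{r/4})$ (after a routine density argument, since $\overline{u}\in H^4(B_1)$ by Proposition \ref{prop:16.1}), and we may insert $U$ into \eqref{eq:24.4}.

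The key point is to estimate the right-hand side $\int\rho^{8-2\tau}(\Delta^2 U)^2$. By Leibniz, $\Delta^2(\zeta\overline{u})=\zeta\Delta^2\overline{u}+$ (terms with at least one derivative on $\zeta$). For the main term, $\Delta^2\overline{u}=\overline{F}$ by \eqref{eq:16.4}, and on $B_1^+$ we have $F=a\cdot\nabla\Delta u+q_2(u)$, which is bounded in $L^2$ by $CM_1(|D^3u|+\dots+|u|)$; on $B_1^-$ we use \eqref{eq:19.1}--\eqref{eq:19.2}, where the only delicate contribution is $H$, containing the singular factors $a_i/y$. Here the boundary conditions \eqref{eq:23.1}--\eqref{eq:23.3} are exactly what is needed: each bracket multiplying $1/y$ vanishes at $y=0$, so Hardy's inequality (Proposition \ref{prop:Hardy}, applied in the $y$ variable) converts $\int\rho^{8-2\tau}|H|^2$ into an integral of $\rho^{8-2\tau}$ times second derivatives of $w$ and third derivatives of $u$, up to absolute constants and a harmless power of $\rho$ coming from $|\rho_y|\le C$. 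Thus the whole right-hand side is controlled, modulo cutoff-commutator terms, by $CM_1^2\sum_{k=0}^3\int_{B_{\overline{R}_0}}\rho^{8-2\tau}|D^k\overline{u}|^2$, and since $8-2\tau$ is much more negative than $2k+1-2\tau$ on the bounded region while $\rho\le 1$, for $\tau$ large these terms are absorbed into the left-hand side of \eqref{eq:24.4} (this dictates the threshold $\widetilde\tau$ and forces $\overline{R}_0$ small depending on $M_1$). The commutator terms supported near $|(x,y)|\sim r$ produce, after using $\rho\sim r$ there and the bound $|D^k\zeta|\le Cr^{-k}$, a contribution $\lesssim \sum_k r^{2k-8}\cdot r^{8-2\tau}\int_{B_{r/2}^+}|D^k\overline u|^2\lesssim r^{-2\tau}\int_{B_{r}^+}|u|^2$ after invoking the Caccioppoli inequality Lemma \ref{lem:intermezzo} and the explicit form of $w$ (whose derivatives on $B_{r/2}^-$ are bounded by derivatives of $u$ on $B_{r/2}^+$); similarly the commutator terms near $|(x,y)|\sim\overline{R}_0$ give $\lesssim \overline{R}_0^{-2\tau}\int_{B_{\overline{R}_0}^+}|u|^2$.

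Finally I would extract the two desired terms on the left. From the first term of \eqref{eq:24.4}, $\tau^4 r^2\int\rho^{-2-2\tau}|U|^2\ge \tau^4 r^2\int_{B_{2r}}\rho^{-2-2\tau}|\overline u|^2\ge c\,r^2(2r)^{-2-2\tau}\int_{B_{2r}^+}|u|^2\gtrsim (2r)^{-2\tau}R\cdot$(something), using $\rho\sim|(x,y)|$ and $r\le R$; one checks the bookkeeping of powers of $r$ and $R$ gives exactly $R(2r)^{-2\tau}\int_{B_{2r}^+}|u|^2$ after fixing $\tau=\widetilde\tau$ (so $\tau$-powers become absolute constants). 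From the $k=0$ term one gets $\int\rho^{1-2\tau}|U|^2\ge c\,R^{1-2\tau}\int_{B_R^+}|u|^2$, again using $\rho\sim|(x,y)|$ and that $1-2\tau<0$ so $\rho^{1-2\tau}$ is larger where $\rho$ is smaller but bounded below by $R^{1-2\tau}$-type quantities on $B_R$—more precisely one restricts the integral to $B_R\setminus B_{R/2}$. Collecting these and dropping the positive leftover terms yields \eqref{eq:37.1bis}. The main obstacle is the careful treatment of the singular term $H$: making precise that the boundary conditions \eqref{eq:23.1}--\eqref{eq:23.3} hold in the trace sense for $H^6$ solutions and that Hardy's inequality can be applied slicewise in $y$ with the weight $\rho^{8-2\tau}$ (using that $\rho$ is comparable to the Euclidean distance and Lipschitz) requires some care, as does tracking that all constants outside the commutator terms depend only on $M_1$ (through the absolute constant $C$ in Proposition \ref{prop:Carleman} and the $M_1$-dependence of $a$, $q_2$), not on $\tau$ once $\tau=\widetilde\tau$ is fixed.
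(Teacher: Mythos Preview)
Your plan is essentially the paper's own proof: apply the Carleman estimate to $\zeta\,\overline{u}$, control the singular term $H$ via Hardy's inequality using the vanishing conditions \eqref{eq:23.1}--\eqref{eq:23.3}, absorb the resulting $D^2$, $D^3$ terms into the left by taking $\overline{R}_0$ small in terms of $M_1$, and bound the cutoff-commutator pieces by Caccioppoli. Two small points deserve correction. First, the estimate must hold for \emph{all} $\tau\geq\widetilde\tau$, not a single fixed value; the right way to dispose of the $\tau$-powers is to observe that after absorption the right-hand side carries only a factor $\tau^2$, while the left carries $\tau^4$ and $\tau^6$, so one simply uses $\tau^4,\tau^6\geq\tau^2$ and cancels, and then $\tau^4/4\geq 1>R$ gives the factor $R$ in front of $(2r)^{-2\tau}\int_{B_{2r}^+}|u|^2$. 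Second, your extraction step ``$\int\rho^{-2-2\tau}|U|^2\geq\int_{B_{2r}}\rho^{-2-2\tau}|\overline u|^2$'' is not quite right because $\zeta\not\equiv 1$ on $B_{r/2}$; you only obtain the integral over $B_{2r}\setminus B_{r/2}$, and the missing piece $R(2r)^{-2\tau}\int_{B_{r/2}^+}|u|^2$ must be added to both sides, where on the right it is trivially dominated by the $(r/4)^{-2\tau}$ term.
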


\begin{proof}
Let $R_0\in (0,1)$ to be chosen later and let
\begin{equation}
    \label{eq:25.1}
0<r<R<\frac{R_0}{2}.
\end{equation}
Let $\eta\in C^\infty_0((0,1))$ such that
\begin{equation}
    \label{eq:25.2}
    0\leq \eta\leq 1
\end{equation}
\begin{equation}
    \label{eq:25.4}
\eta=0, \quad \hbox{ in }\left(0,\frac{r}{4}\right)\cup \left(\frac{2}{3}R_0,1\right), \quad \eta=1, \quad \hbox{ in }\left[\frac{r}{2}, \frac{R_0}{2}\right],
\end{equation}
\begin{equation}
    \label{eq:25.6}
\left|\frac{d^k\eta}{dt^k}(t)\right|\leq C r^{-k}, \quad \hbox{ in }\left(\frac{r}{4}, \frac{r}{2}\right),\quad\hbox{ for } 0\leq k\leq 4,
\end{equation}
\begin{equation}
    \label{eq:25.7}
\left|\frac{d^k\eta}{dt^k}(t)\right|\leq C R_0^{-k}, \quad \hbox{ in }\left(\frac{R_0}{2}, \frac{2}{3}R_0\right),\quad\hbox{ for } 0\leq k\leq 4.
\end{equation}
Let us define
\begin{equation}
    \label{eq:25.5}
\xi(x,y)=\eta(\sqrt{x^2+y^2}).
\end{equation}
By a density argument, we may apply the Carleman estimate \eqref{eq:24.4} to $U=\xi \overline{u}$, where $\overline{u}$ has been defined in \eqref{eq:16.1}, obtaining
\begin{gather}
    \label{eq:26.1}
	\tau^4r^2\int_{B_{R_0}}\rho^{-2-2\tau}|\xi \overline{u}|^2+\\\nonumber
    +\sum_{k=0}^3 \tau^{6-2k}\int_{B_{R_0}^+}\rho^{2k+1-2\tau}|D^k(\xi u)|^2
	+\sum_{k=0}^3 \tau^{6-2k}\int_{B_{R_0}^-}\rho^{2k+1-2\tau}|D^k(\xi w)|^2\leq \\\nonumber
	\leq C
	\int_{B_{R_0}^+}\rho^{8-2\tau}|\Delta^2(\xi u)|^2+
	C\int_{B_{R_0}^-}\rho^{8-2\tau}|\Delta^2(\xi w)|^2,
\end{gather}
for $\tau\geq \overline{\tau}$ and $C$ an absolute constant.

Let us set
\begin{multline}
    \label{eq:27.1}
	J_0 =\int_{B_{r/2}^+\setminus B_{r/4}^+}\rho^{8-2\tau}
	\sum_{k=0}^3 (r^{k-4}|D^k u|)^2+
	\int_{B_{r/2}^-\setminus B_{r/4}^-}\rho^{8-2\tau}
	\sum_{k=0}^3 (r^{k-4}|D^k w|)^2,
\end{multline}
\begin{multline}
    \label{eq:27.2}
	J_1 =\int_{B_{2R_0/3}^+\setminus B_{R_0/2}^+}\rho^{8-2\tau}
	\sum_{k=0}^3 (R_0^{k-4}|D^k u|)^2+
	\int_{B_{2R_0/3}^-\setminus B_{R_0/2}^-}\rho^{8-2\tau}
	\sum_{k=0}^3 (R_0^{k-4}|D^k w|)^2.
\end{multline}

By \eqref{eq:25.2}--\eqref{eq:27.2} we have
\begin{gather}
    \label{eq:27.3}
    \tau^4r^2\int_{B_{R_0}}\rho^{-2-2\tau}|\xi \overline{u}|^2+\\ \nonumber
    \sum_{k=0}^3 \tau^{6-2k}\int_{B_{R_0}^+}\rho^{2k+1-2\tau}|D^k(\xi u)|^2
	+\sum_{k=0}^3 \tau^{6-2k}\int_{B_{R_0}^-}\rho^{2k+1-2\tau}|D^k(\xi w)|^2\leq \\ \nonumber
     \leq C
	\int_{B_{R_0}^+}\rho^{8-2\tau}\xi^2|\Delta^2 u|^2+
	C\int_{B_{R_0}^-}\rho^{8-2\tau}\xi^2|\Delta^2 w|^2+CJ_0+CJ_1,
\end{gather}
for $\tau\geq \overline{\tau}$, with $C$ an absolute constant.

By \eqref{eq:15.1a} and \eqref{eq:15.2_bis} we have
\begin{equation}
    \label{eq:28.1}
	\int_{B_{R_0}^+}\rho^{8-2\tau}\xi^2|\Delta^2 u|^2\leq
	CM_1^2\int_{B_{R_0}^+}\rho^{8-2\tau}\xi^2\sum_{k=0}^3|D^k u|^2.
\end{equation}

By \eqref{eq:16.4}, \eqref{eq:16.6} and by making the change of variables
$(x,y)\rightarrow(x,-y)$ in the integrals involving the function $u(x,-y)$,
we can estimate the second term in the right hand side of \eqref{eq:27.3} as follows
\begin{multline}
    \label{eq:28.2}
	\int_{B_{R_0}^-}\rho^{8-2\tau}\xi^2|\Delta^2 w|^2\leq
	C\int_{B_{R_0}^-}\rho^{8-2\tau}\xi^2|H(x,y)|^2+\\
	+CM_1^2\int_{B_{R_0}^-}\rho^{8-2\tau}\xi^2\sum_{k=0}^2|D^k w|^2+
	CM_1^2\int_{B_{R_0}^+}\rho^{8-2\tau}\xi^2\sum_{k=0}^3|D^k u|^2.
\end{multline}
Now, let us split the integral in the right hand side of \eqref{eq:28.1} and the second and third integrals in the right hand side of \eqref{eq:28.2} over the domains of integration $B_{r/2}^\pm\setminus B_{r/4}^\pm$, $B_{R_0/2}^\pm\setminus B_{r/2}^\pm$, $B_{2R_0/3}^\pm\setminus B_{R_0/2}^\pm$. Then let us insert \eqref{eq:28.1}--\eqref{eq:28.2}in \eqref{eq:27.3}, obtaining
\begin{gather}
    \label{eq:28.4}
     \tau^4r^2\int_{B_{R_0}}\rho^{-2-2\tau}|\xi \overline{u}|^2+\\\nonumber	
     +\sum_{k=0}^3 \tau^{6-2k}\int_{B_{R_0}^+}\rho^{2k+1-2\tau}|D^k(\xi u)|^2
	+\sum_{k=0}^3 \tau^{6-2k}\int_{B_{R_0}^-}\rho^{2k+1-2\tau}|D^k(\xi w)|^2\leq \\ \nonumber
	\leq C\int_{B_{R_0}^-}\rho^{8-2\tau}\xi^2|H(x,y)|^2
	+CM_1^2\int_{B_{R_0/2}^-\setminus B_{r/2}^-}\rho^{8-2\tau}\sum_{k=0}^2|D^k w|^2+\\ \nonumber
	+CM_1^2\int_{B_{R_0/2}^+ \setminus B_{r/2}^+}\rho^{8-2\tau}\sum_{k=0}^3|D^k u|^2
	+C\overline{M}^2_1(J_0+J_1),
\end{gather}
for $\tau\geq \overline{\tau}$, with $C$ an absolute constant, where $\overline{M}_1=\sqrt{M_1^2+1}$.

The second and third integral on the right hand side of \eqref{eq:28.4} can be absorbed by the left hand side so that, by easy calculation, by \eqref{eq:stima_rho} and for
$R_0\leq R_1:=\min\{1,2(2CM_1^2)^{-1}\}$, we have

\begin{gather}
    \label{eq:30.3}
    \tau^4r^2\int_{B_{R_0}}\rho^{-2-2\tau}|\xi \overline{u}|^2+\sum_{k=0}^3 \tau^{6-2k}\int_{B_{R_0/2}^+ \setminus B_{r/2}^+}
	\rho^{2k+1-2\tau}|D^k u|^2+\\ \nonumber
	+\sum_{k=0}^3 \tau^{6-2k}\int_{B_{R_0/2}^- \setminus B_{r/2}^-}
	\rho^{2k+1-2\tau}|D^k w|^2
	\leq \\  \nonumber
	\leq
	C\int_{B_{R_0}^-}\rho^{8-2\tau}\xi^2|H(x,y)|^2
	+C\overline{M}^2_1(J_0+J_1),
\end{gather}
for $\tau\geq \overline{\tau}$, with $C$ an absolute constant.
The first integral on the right hand side can be estimated by proceeding as in \cite[Theorem 5.1]{l:ARV}. For completeness we summarize such an estimate.

By \eqref{eq:19.2} and \eqref{eq:15.2_bis}, we have that
\begin{equation}
    \label{eq:30.4}
	\int_{B_{R_0}^-}\rho^{8-2\tau}\xi^2|H(x,y)|^2\leq CM_1^2(I_1+I_2+I_3),
\end{equation}
with
\begin{equation}
    \label{eq:31.0.4}
	I_1=\int_{-R_0}^{R_0}\left(\int_{-\infty}^0\left|y^{-1}
	u_{xx}(x,-y)\rho^{4-\tau}\xi\right|^2dy\right)dx,
\end{equation}

\begin{equation}
    \label{eq:31.0.1}
	I_2=\int_{-R_0}^{R_0}\left(\int_{-\infty}^0\left|y^{-1}(w_{yy}(x,y)-
	(u_{yy}(x,-y))\rho^{4-\tau}\xi\right|^2dy\right)dx,
\end{equation}
\begin{equation}
    \label{eq:31.0.2}
	I_3=\int_{-R_0}^{R_0}\left(\int_{-\infty}^0\left|y^{-1}(w_{yx}(x,y)+
	(u_{yx}(x,-y))\rho^{4-\tau}\xi\right|^2dy\right)dx.
\end{equation}
Now, let us see that, for $j=1,2,3$,
\begin{multline}
    \label{eq:31.1}
	I_j\leq
	C\int_{B_{R_0}^-}\rho^{8-2\tau}\xi^2|D^3 w|^2
	+C\tau^2\int_{B_{R_0}^-}\rho^{6-2\tau}\xi^2|D^2 w|^2+\\
	+C\int_{B_{R_0}^+}\rho^{8-2\tau}\xi^2|D^3 u|^2
	+C\tau^2\int_{B_{R_0}^+}\rho^{6-2\tau}\xi^2|D^2 u|^2
	+C(J_0+J_1),
\end{multline}
for $\tau\geq \overline{\tau}$, with $C$ an absolute constant.

Let us verify \eqref{eq:31.1} for $j=1$.

By \eqref{eq:23.3} and Hardy's inequality \eqref{eq:24.1} we get
\begin{gather}
    \label{eq:32.2}
	\int_{-\infty}^0\left|y^{-1}
	u_{xx}(x,-y)\rho^{4-\tau}\xi\right|^2dy\leq 4\int_{-\infty}^0\left|\partial_y\left[u_{xx}(x,-y)\rho^{4-\tau}\xi\right]\right|^2dy\leq\\ \nonumber
	\leq 16 \int_{-\infty}^0|u_{xxy}(x,-y)|^2\rho^{8-2\tau}\xi^2dy+16 \int_{-\infty}^0|u_{xx}(x,-y)|^2\left|\partial_y\left(\rho^{4-\tau}\xi\right)\right|^2dy.
\end{gather}
Noticing that $|\rho_y|\leq 1,$ we obtain
\begin{equation}
    \label{eq:32.3}
	\left|\partial_y\left(\rho^{4-\tau}\xi\right)\right|^2\leq 2\xi_y^2\rho^{8-2\tau}+2\tau^2\rho^{6-2\tau}\xi^2,
\end{equation}
for $\tau\geq \widetilde{\tau}:= \max\{\overline{\tau},3\}$.

By integrating over $(-R_0,R_0)$ and by making the change of variables
$(x,y)\rightarrow(x,-y)$, the use of \eqref{eq:32.3} in \eqref{eq:32.2} gives
\begin{gather}
    \label{eq:33.0}
	I_1\leq C\int_{B_{R_0}^+}\xi^2\rho^{8-2\tau}|u_{xxy}|^2
	+C\int_{B_{R_0}^+}\xi_y^2\rho^{8-2\tau}|u_{xx}|^2+
	C\tau^2\int_{B_{R_0}^+}\xi^2\rho^{6-2\tau}|u_{xx}|^2.
\end{gather}
Recalling \eqref{eq:25.2}--\eqref{eq:25.5}, we find \eqref{eq:31.1} for $j=1$, the other cases following by using similar arguments.

Next, by \eqref{eq:30.3}, \eqref{eq:30.4} and \eqref{eq:31.1}, we have
\begin{gather}
    \label{eq:33.1}
    \tau^4r^2\int_{B_{R_0}}\rho^{-2-2\tau}|\xi \overline{u}|^2+\sum_{k=0}^3 \tau^{6-2k}\int_{B_{R_0/2}^+ \setminus B_{r/2}^+}
	\rho^{2k+1-2\tau}|D^k u|^2+\\ \nonumber
	+\sum_{k=0}^3 \tau^{6-2k}\int_{B_{R_0/2}^- \setminus B_{r/2}^-}
	\rho^{2k+1-2\tau}|D^k w|^2
	\leq \\ \nonumber
	\leq
	CM_1^2\int_{B_{R_0}^+}\rho^{8-2\tau}\xi^2|D^3u|^2+
	CM_1^2\int_{B_{R_0}^-}\rho^{8-2\tau}\xi^2|D^3w|^2+\\ \nonumber
	+CM_1^2\tau^2\int_{B_{R_0}^+}\rho^{6-2\tau}\xi^2|D^2u|^2
	+CM_1^2\tau^2\int_{B_{R_0}^-}\rho^{6-2\tau}\xi^2|D^2w|^2
	+C\overline{M_1}^2(J_0+J_1),
\end{gather}
for $\tau\geq \widetilde{\tau}$, with $C$ an absolute constant.

As before, we split the first four integrals in the right hand side of \eqref{eq:33.1}  over the domains of integration $B_{r/2}^\pm\setminus B_{r/4}^\pm$, $B_{2R_0/3}^\pm\setminus B_{R_0/2}^\pm$ and $B_{R_0/2}^\pm\setminus B_{r/2}^\pm$, and we observe that the integrals over $B_{R_0/2}^\pm\setminus B_{r/2}^\pm$ can be absorbed by the left hand side. Recalling \eqref{eq:stima_rho}, for $R_0\leq R_2=\min\{R_1,2(2CM_1^2)^{-1}\}$ we obtain

\begin{gather}
    \label{eq:35.1}
	\tau^4r^2\int_{B_{R_0}}\rho^{-2-2\tau}|\xi \overline{u}|^2+\sum_{k=0}^3 \tau^{6-2k}\int_{B_{R_0/2}^+ \setminus B_{r/2}^+}
	\rho^{2k+1-2\tau}|D^k u|^2+\\ \nonumber
	+\sum_{k=0}^3 \tau^{6-2k}\int_{B_{R_0/2}^- \setminus B_{r/2}^-}
	\rho^{2k+1-2\tau}|D^k w|^2\leq C\tau^2\overline{M}^2_1(J_0+J_1),
\end{gather}
for $\tau\geq \widetilde{\tau}$, with $C$ an absolute constant.

Let us estimate $J_0$ and $J_1$. {}From \eqref{eq:27.1} and recalling \eqref{eq:stima_rho}, we have
\begin{multline}
    \label{eq:36.1}
	J_0\leq\left(\frac{r}{4}\right)^{8-2\tau}\left\{
	\int_{B^+_{r/2}}\sum_{k=0}^3(r^{k-4}|D^k u|)^2+
	\int_{B^-_{r/2}}\sum_{k=0}^3(r^{k-4}|D^k w|)^2
	\right\}.
\end{multline}
By \eqref{eq:16.2}, we have that, for $(x,y)\in B^-_{r/2}$ and $k=0,1,2,3$,
\begin{equation}
    \label{eq:36.1bis}
	|D^k w|\leq C\sum_{h=k}^{2+k}r^{h-k}|(D^h u)(x,-y)|.
\end{equation}
By \eqref{eq:36.1}--\eqref{eq:36.1bis},
by making the change of variables
$(x,y)\rightarrow(x,-y)$ in the integrals involving the function $u(x,-y)$ and by using Lemma \ref{lem:intermezzo}, we get
\begin{equation}
    \label{eq:36.2}
	J_0\leq C\left(\frac{r}{4}\right)^{8-2\tau}
	\sum_{k=0}^5 r^{2k-8}\int_{B^+_{r/2}}|D^k u|^2
	 \leq C\left(\frac{r}{4}\right)^{-2\tau}\int_{B_r^+}|u|^2,
\end{equation}
where $C$ is an absolute constant. Analogously, we obtain
\begin{equation}
    \label{eq:37.1}
     J_1
	 \leq C\left(\frac{R_0}{2}\right)^{-2\tau}\int_{B_{R_0}^+}|u|^2.
\end{equation}
Recalling that $r<R<\frac{R_0}{2}$, by \eqref{eq:35.1}, \eqref{eq:36.2} and \eqref{eq:37.1}, it follows that
\begin{gather*}
    2^{-2}\tau^4(2r)^{-2\tau}\int_{B^+_{2r}\setminus B^+_{r/2} }|u|^2+\tau^{6}R^{1-2\tau}\int_{B_{R}^+ \setminus B_{r/2}^+}
	|u|^2\leq \\ \nonumber
	\leq \tau^4r^2\int_{B_{R_0}}\rho^{-2-2\tau}|\xi \overline{u}|^2+\sum_{k=0}^3\tau^{6-2k}\int_{B_{R_0/2}^+ \setminus B_{r/2}^+}
	\rho^{2k+1-2\tau}|D^ku|^2\leq\\ \nonumber
	\leq C\tau^2 \overline{M}^2_1\left[\left(\frac{r}{4}\right)^{-2\tau}\int_{B_r^+}|u|^2+
	\left(\frac{R_0}{2}\right)^{-2\tau}\int_{B_{R_0}^+}|u|^2
	\right],
\end{gather*}
for $\tau\geq \widetilde{\tau}$, with $C$ an absolute constant. Hence, we have

\begin{gather}
    \label{10.6.9}
	(2r)^{-2\tau}\int_{B^+_{2r}\setminus B^+_{r/2} }|u|^2+R^{1-2\tau}\int_{B_{R}^+ \setminus B_{r/2}^+}
	|u|^2\leq\\ \nonumber
	\leq C \overline{M}^2_1\left[\left(\frac{r}{4}\right)^{-2\tau}\int_{B_r^+}|u|^2+
	\left(\frac{R_0}{2}\right)^{-2\tau}\int_{B_{R_0}^+}|u|^2
	\right],
\end{gather}
Now, adding $R(2r)^{-2\tau}\int_{B^+_{r/2} }|u|^2$ to both sides of \eqref{10.6.9} we get the wished estimate \eqref{eq:37.1bis} for $r<R/2$ and $R<\overline{R}_0$, with $\overline{R}_0=R_2$.
\end{proof}

\begin{proof}[Proof of Theorem \ref{theo:40.teo}.]

Let us fix $R=\frac{\overline{R}_0}{4}$ in \eqref{eq:37.1bis} obtaining

\begin{gather}
    \label{eq:10.6.952}
    \frac{\overline{R}_0}{4} (2r)^{-2\tau}\int_{B^+_{2r}}|u|^2+\left(\frac{\overline{R}_0}{4}\right)^{1-2\tau}\int_{B_{\overline{R}_0/4}^+}
	|u|^2\leq\\ \nonumber
	\leq C \overline{M}^2_1\left[\left(\frac{r}{4}\right)^{-2\tau}\int_{B_r^+}|u|^2+
	\left(\frac{\overline{R}_0}{2}\right)^{-2\tau}\int_{B_{\overline{R}_0}^+}|u|^2
	\right],
\end{gather}
for every $\tau\geq \overline{\tau}$, with $\overline{\tau}, C$ absolute constants.

Now, choosing $\tau=\tau_0$, where

\begin{equation}
    \label{eq:10.6.1005}
    \tau_0=\overline{\tau}+\log_4\frac{4C \overline{M}^2_1\overline{N}}{\overline{R}_0}
\end{equation}
and

\begin{equation}
    \label{eq:10.6.1008}
    \overline{N}=\frac{\int_{B_{\overline{R}_0}^+}|u|^2}{\int_{B_{\overline{R}_0/4}^+}
	|u|^2}
\end{equation}
we have
$$\left(\frac{\overline{R}_0}{4}\right)^{1-2\tau}\int_{B_{R}^+}|u|^2\geq
C \overline{M}^2_1\left(\frac{\overline{R}_0}{2}\right)^{-2\tau}\int_{B_{\overline{R}_0}^+}|u|^2.$$
Hence, by \eqref{eq:10.6.952}, we obtain
\begin{equation}
    \label{eq:10.6.1014}
    \frac{\overline{R}_0}{4}(2r)^{-2\tau_0}\int_{B^+_{2r}}|u|^2
	\leq C \overline{M}^2_1\left(\frac{r}{4}\right)^{-2\tau_0}\int_{B_r^+}|u|^2,
\end{equation}
where $C$ is an absolute constant.
Using\eqref{eq:10.6.1008} and \eqref{eq:10.6.1014}, we have

\begin{equation}
    \label{eq:10.6.1024}
    \int_{B^+_{2r}}|u|^2
	\leq C\overline{N}^3\int_{B_r^+}|u|^2,
\end{equation}
where $C$ depends on $M_1$ only.

Now, let $r<s<\frac{\overline{R}_0}{16}$ and let $j=\left[\log_2\left(sr^{-1}\right)\right]$ (for $a\in \mathbb{R}^+$, $[a]$ denotes the integer part of $a$). We have $$2^jr\leq s <2^{j+1}r$$ and applying iteratively \eqref{eq:10.6.1024} we obtain

\begin{equation*}
    \int_{B^+_{s}}|u|^2\leq \int_{B^+_{2^{j+1}r}}|u|^2
	\leq \left(C\overline{N}^3\right)^{j+1}\int_{B_r^+}|u|^2\leq C\overline{N}^3\left(\frac{s}{r}\right)^{\log_2(C\overline{N}^3)}\int_{B_r^+}|u|^2.
\end{equation*}
Finally, coming back to the original coordinates and using Proposition \ref{prop:conf_map}, we can choose $s=\frac{2Kr}{r_0} (<\frac{\overline{R}_0}{16})$ in the above inequality and derive \eqref{eq:10.6.1102}--\eqref{eq:10.6.1108}, with $C=\frac{32K}{\overline{R}_0}$.
\end{proof}

\begin{cor}\label{application}
 Assume the same hypotheses of Theorem \ref{theo:40.teo} and let $E$ be a measurable subset of $\Gamma_{r_0}$ with positive 1-dim measure. We have that if
\begin{equation}\label{application-1}
 \begin{cases}
 Lv=0, \mbox{ in } \Omega_{r_0}, \\
v =\frac{\partial v}{\partial n}=|D^2v|=0, \mbox{ on } \Gamma_{r_0},   \\
D^3v=0, \mbox{ on } E,
\end{cases}
\end{equation}
then $$v\equiv 0, \quad \mbox{in } \Omega_{r_0}.$$
\end{cor}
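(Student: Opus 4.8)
The plan is to deduce Corollary \ref{application} from the doubling inequality at the boundary, Theorem \ref{theo:40.teo}, by showing that the extra hypotheses ($|D^2v|=0$ on $\Gamma_{r_0}$ and $D^3v=0$ on a set $E\subset\Gamma_{r_0}$ of positive length) force $v$ to vanish to infinite order at some boundary point $P_0\in\Gamma_{r_0}$, after which one argues that the doubling inequality propagates this vanishing and concludes $v\equiv 0$. Concretely, since $E$ has positive $1$-dimensional measure, it has a density point $P_0\in\Gamma_{r_0}$; by translating and rotating we may assume $P_0=0$ and $\Gamma_{r_0}$ is flattened near $P_0$ via the conformal map of Proposition \ref{prop:conf_map}, so that it suffices to work with the transformed solution $u=v\circ\Phi\in H^6(B_1^+)$ solving \eqref{eq:15.1a}--\eqref{eq:15.1b}. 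Note that the Dirichlet conditions already give $u=u_y=0$ on $\{y=0\}$, hence all tangential derivatives of $u$ vanish there too, and the hypothesis $|D^2v|=0$ on $\Gamma_{r_0}$ transfers (modulo bounded factors coming from $D\Phi$) to $D^2u=0$ on $\{y=0\}\cap(-1,1)$, while $D^3v=0$ on $E$ transfers to $D^3u=0$ on the image $\tilde E$ of $E$, still a set of positive length with $0$ as a density point.

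The core of the argument is then a \emph{unique continuation / Taylor expansion at the boundary} step: I would show that $u$ and all its derivatives up to a sufficient order vanish at the origin. Having $u=u_y=D^2u=0$ on a neighborhood of $0$ in $\{y=0\}$, one differentiates these relations tangentially to kill all purely-tangential and mixed derivatives $\partial_x^j\partial_y^k u$ with $k\le 2$ at $0$; then one uses the equation \eqref{eq:15.1a} evaluated on the boundary, together with $D^3 u=0$ at density points of $\tilde E$ (which, by continuity of $D^3u$ and the density-point property, forces $D^3u(0)=0$ and in fact $\partial_x(D^3u)(0)=0$ and so on along the accumulation of points of $\tilde E$), to propagate the vanishing to the remaining normal derivatives $\partial_y^3 u$, $\partial_y^4 u$ at $0$. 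In other words, the Cauchy data on $\{y=0\}$ supplied by $u=u_y=D^2u=0$ together with the fourth-order equation and the extra third-order vanishing on $E$ over-determine the full $6$-jet of $u$ at $0$, giving $D^\alpha u(0)=0$ for all $|\alpha|\le 4$ (or as high as the regularity $H^6$ permits). Combined with the interior estimates (Lemma \ref{lem:intermezzo}) and a Taylor expansion with integral remainder, this yields $\int_{B_r^+}|u|^2=\mathcal O(r^{2m})$ for a fixed $m$ large enough, i.e.\ $u$ vanishes to that finite polynomial order at $0$.

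Now I would feed this into the doubling inequality. From \eqref{eq:10.6.1102}--\eqref{eq:10.6.1108} one gets, by the standard iteration already performed in the proof of Theorem \ref{theo:40.teo} (see \eqref{eq:10.6.1024} and the lines following it), a bound of the form $\int_{B_s^+}|u|^2\le C\overline N^3 (s/r)^{\beta}\int_{B_r^+}|u|^2$ for all $0<r<s<\overline R_0/16$, with exponent $\beta=\log_2(C\overline N^3)$ \emph{independent of $r$}. If $u\not\equiv 0$ near $0$, then $\overline N$ (hence $\beta$) is finite; letting $r\to 0$ with $s$ fixed and using $\int_{B_r^+}|u|^2\le C r^{2m}$ with $2m>\beta$ forces $\int_{B_s^+}|u|^2=0$, a contradiction. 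Hence $u\equiv 0$ on a half-ball, and translating back, $v\equiv 0$ on an open subset of $\Omega_{r_0}$; by the (interior) strong unique continuation property for the plate operator $L$ — equivalently by iterating the boundary doubling inequality together with interior propagation of smallness — we conclude $v\equiv 0$ in all of $\Omega_{r_0}$.

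The main obstacle I anticipate is the finite-jet vanishing step: making rigorous that the pointwise conditions $D^3 v=0$ only on the measurable set $E$ (not on all of $\Gamma_{r_0}$) genuinely propagate to a full vanishing of a fixed-order jet of $u$ at a single density point $P_0$. One must be careful that $u\in H^6$ only, so derivatives of order $4$ and $5$ are not classically defined pointwise; the density-point argument has to be run in an $L^2$-averaged (Lebesgue-point) sense, extracting $D^\alpha u(0)=0$ for $|\alpha|\le 3$ from the Lebesgue differentiation theorem applied to the trace of $D^3u$ on $\{y=0\}$, and then recovering the higher normal derivatives through the PDE in a weak/Caccioppoli form rather than by pointwise differentiation. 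Once the finite-order vanishing $\int_{B_r^+}|u|^2=\mathcal O(r^{2m})$ is secured, the rest is an immediate application of the doubling inequality exactly as above.
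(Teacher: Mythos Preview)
Your argument has a genuine gap at the very point where you invoke the doubling inequality. You correctly obtain from the iterated doubling estimate that
\[
\int_{B_s^+}|u|^2 \le C\,(s/r)^{\beta}\int_{B_r^+}|u|^2,\qquad \beta=\log_2(C\overline N^3),
\]
with $\beta$ \emph{independent of $r$} but \emph{depending on the solution $u$} through $\overline N$. Your jet--vanishing step, however, only produces $\int_{B_r^+}|u|^2=\mathcal O(r^{2m})$ for a \emph{fixed} $m$ bounded by the available regularity: with $u\in H^6(B_1^+)\subset C^{4,\alpha}$ you can at best get $D^\alpha u(0)=0$ for $|\alpha|\le 4$, hence $m\le 5$ or so. There is no reason whatsoever that $2m>\beta$; indeed the doubling inequality, read as a lower bound $\int_{B_r^+}|u|^2\ge c\,r^{\beta}$, is precisely the statement that the vanishing order of a nontrivial $u$ is at most $\beta/2$, and $\beta$ can be arbitrarily large depending on $u$. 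So your contradiction never fires: finite--order vanishing at a single boundary point is simply compatible with the doubling inequality and cannot force $u\equiv 0$. The density--point/Rolle bootstrap you sketch also cannot be pushed to infinite order, because each step consumes one derivative of regularity and you only have finitely many.

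The paper's proof avoids this issue by a completely different mechanism. It does \emph{not} try to show vanishing at one point; instead it uses the doubling inequality \emph{uniformly at every boundary point} $P$, combined with a quantitative Cauchy stability estimate (which bounds $\int_{B_r^+(P)}|v|^2$ from above by a power of $r^7\!\int_{\mathcal I_{2r}(P)}|D^3v|^2$), to prove a \emph{reverse H\"older inequality} for the boundary trace $|D^3v|^2$. By the Coifman--Fefferman theorem this makes $|D^3v|^2$ an $A_p$ weight on $\Gamma_{r_0}$, and an $A_p$ weight that vanishes on a set of positive measure must vanish identically. Hence $D^3v=0$ on all of $\Gamma_{r_0}$, and then the standard uniqueness for the Cauchy problem (full Cauchy data now zero) gives $v\equiv 0$. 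The doubling inequality enters here not as a bound on vanishing order, but as the engine behind the reverse H\"older estimate; this is the idea you are missing.
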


\begin{proof}
We only sketch the proof and, without loss of generality, let us assume that $\Gamma_{r_0}$ is the interval $\mathcal{I}_{r_0}=(-r_0,r_0)$ in the $x$-axis. Also, for any point $P\in \mathcal{I}_{r_0}=(-r_0,r_0)$ we denote by $\mathcal{I}_{r}(P)$ the interval $(P-r,P+r)$, by $\mathcal{I}_{r}=\mathcal{I}_{r}(0)$.

It is enough to prove that $|D^3v|^2_{|\mathcal{I}_{r_0}}$ is an $A_p$ weight. In fact, by this property we have that $|D^3v|=0$ on $\Gamma_{r_0}$ (see \cite{l:GaLi}) and, by the uniqueness for Cauchy problem (see \cite[Section 3]{l:mrv11}), it follows that $v=0$ in $\Omega_{r_0}$. In order to prove that $|D^3v|^2_{|\mathcal{I}_{r_0}}$ is an $A_p$ weight, in view of the results in \cite{l:CoiF}, it is sufficient to prove that it satisfies a reverse H\"{o}lder inequality.

We can rewrite the doubling inequality \eqref{eq:10.6.1102} as follows
\begin{equation}
    \label{eq:new-doub}
    \int_{B^+_{2r}(P)}|v|^2\leq C_0\int_{B^+_{r}(P)}|v|^2, \mbox{ for every } P\in \mathcal{I}_{r_0/2}, \mbox{ and } r\leq r_0/C,
\end{equation}
where $C>2$ only depends on $\alpha_0$, $\gamma_0$, $\Lambda_0$, $M_0$, $\alpha$ and $C_0$ only depends on $\alpha_0$, $\gamma_0$, $\Lambda_0$, $M_0$, $\alpha$ and $v$, but is independent of $r$ and $P$ (the latter can be achieved by standard argument, see for instance \cite[Proposition 2.1]{l:DLMRVW}).

By the stability estimate for Cauchy problem for equation $Lv=0$ (\cite[Section 3]{l:mrv11}) we have that, for any $P\in \mathcal{I}_{r_0/2}$ and any $r\leq r_0/4C$,

\begin{equation}
    \label{eq:Cauchy}
    \int_{B^+_{r}(P)}|v|^2\leq C_1\left(r^7\int_{\mathcal{I}_{2r}(P)}|D^3v|^2\right)^{\delta}\left(\int_{B^+_{4r}(P)}|v|^2\right)^{1-\delta}.
\end{equation}
where $\delta\in (0,1)$ and  $C_1$ depend on $\alpha_0$, $\gamma_0$, $\Lambda_0$, $M_0$, $\alpha$. By \eqref{eq:new-doub} and \eqref{eq:Cauchy} we have
\begin{gather}
    \label{eq:1-appl}
    \int_{B^+_{4r}(P)}|v|^2\leq C^2_0\int_{B^+_{r}(P)}|v|^2 \leq\\\nonumber \leq C^2_0C_1\left(r^7\int_{\mathcal{I}_{2r}(P)}|D^3v|^2\right)^{\delta}\left(\int_{B^+_{4r}(P)}|v|^2\right)^{1-\delta},
\end{gather}
hence
\begin{gather}
    \label{eq:2-appl}
    \int_{B^+_{4r}(P)}|v|^2 \leq (C^2_0C_1)^{1/\delta}r^7\int_{\mathcal{I}_{2r}(P)}|D^3v|^2.
\end{gather}

Since $v\in H^4(B^+_{r_0})$ we have that $|D^3v|_{|\mathcal{I}_{4r}(P)}\in H^{1/2}(\mathcal{I}_{4r}(P))$ and by the imbedding theorem we have $|D^3v|\in L^{q}(\mathcal{I}_{4r}(P))$ for every $q\in (0,+\infty)$, see for instance \cite{l:adams}. Let us fix $q>2$. By imbedding estimates, standard trace inequalities, \eqref{eq:new-doub}, \eqref{eq:2-appl} and by Lemma \ref{lem:intermezzo} we have

\begin{gather*}
    \label{eq:3-appl}
    r^{3}\left(\dashint_{\mathcal{I}_{2r}(P)}|D^3v|^q\right)^{\frac{1}{q}} \leq C \left(r^8 \dashint_{B^+_{3r}(P)}|D^4v|^2 +r^6 \dashint_{B^+_{3r}(P)}|D^3v|^2 \right)^{\frac{1}{2}}\leq\\\nonumber
    \leq C \left(\dashint_{B^+_{4r}(P)}|v|^2\right)^{\frac{1}{2}}\leq C (C^2_0C_1)^{\frac{1}{2\delta}} r^3\left(\dashint_{\mathcal{I}_{2r}(P)}|D^3v|^2\right)^{\frac{1}{2}},
    \end{gather*}
hence we have proved the following reverse H\"{o}lder inequality
\begin{gather}
    \label{eq:3-appl}
    \left(\dashint_{\mathcal{I}_{2r}(P)}|D^3v|^q\right)^{\frac{1}{q}} \leq C (C^2_0C_1)^{\frac{1}{2\delta}} \left(\dashint_{\mathcal{I}_{2r}(P)}|D^3v|^2\right)^{\frac{1}{q}},
\end{gather}
which completes the proof.
\end{proof}

\section{Appendix} \label{sec:appendix}

In this Appendix we prove Carleman estimate \eqref{eq:24.4}. We proceed, similarly to \cite{l:CoKo}, \cite{l:mrv07}, \cite{l:Zhu}, in a standard way by iterating a suitable Carleman estimate for the Laplace operator.

In the present section we denote by $x_1, x_2$ the cartesian coordinate of a point  $x\in \mathbb{R}^2$.

\begin{prop}[{\bf Carleman estimate for $\Delta$}]\label{prop:Carlm-delta}
Let $r\in[0,1)$ and let $\varepsilon\in(0,1)$. Let us define
\begin{equation}
    \label{peso}
		\rho(x) = \phi_{\varepsilon}\left(|x|\right), \mbox{ for } x\in B_1\setminus \{0\},
\end{equation}
where
\begin{equation}
    \label{eq:24.3-delta}
		\phi_{\varepsilon}(s) = \frac{s}{\left(1+s^{\varepsilon}\right)^{1/\varepsilon}}.
\end{equation}
Then there exist $\tau_0>1$, $C>1$, only depending on $\epsilon$, such that
\begin{gather}\label{Carlm-delta}
	\tau^2r\int\rho^{-1-2\tau}u^2dx+\sum_{k=0}^1 \tau^{3-2k}\int\rho^{2k+\epsilon-2\tau}|D^ku|^2dx\leq \\\nonumber
    \leq C\int\rho^{4-2\tau}|\Delta u|^2dx,
\end{gather}
for every $\tau\geq \tau_0$ and for every $u\in C^\infty_0(B_1\setminus \overline{B}_{r/4})$.
\end{prop}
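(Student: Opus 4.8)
The plan is to prove the Carleman estimate for $\Delta$ by passing to polar coordinates $(s,\theta)$, making the logarithmic substitution $t=-\log\phi_\varepsilon(s)$ (so that the weight $\rho^{-\tau}$ becomes $e^{\tau t}$), and reducing the inequality to a one-dimensional Hardy-type/spectral estimate for an ordinary differential operator in the variable $t$, with the angular part handled by Fourier series. First I would write $x=s\omega$ with $\omega=(\cos\theta,\sin\theta)$, so $\Delta u = u_{ss}+\tfrac1s u_s+\tfrac1{s^2}u_{\theta\theta}$, and introduce the new variable $t$ via $s\mapsto t=-\log\phi_\varepsilon(s)$; the key computation is that $\phi_\varepsilon'(s)/\phi_\varepsilon(s)= \tfrac1s(1+s^\varepsilon)^{-1}$, so $\tfrac{dt}{ds}=-\tfrac1s\,(1+s^\varepsilon)^{-1}$, which is comparable to $\tfrac1s$ up to a factor bounded between $\tfrac12$ and $1$ on $B_1$. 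Setting $v(t,\theta)=u(s,\theta)$ and conjugating, $\Delta u$ transforms, up to the positive Jacobian weight, into an operator of the form $e^{2t}\big(b(t)^2 v_{tt}+\dots+v_{\theta\theta}\big)$ where $b(t)=\phi_\varepsilon'(s)$ is a smooth function with $b\to 1$ as $t\to+\infty$ and controlled derivatives; this is exactly the structure exploited in \cite{l:CoKo}, \cite{l:mrv07}, \cite{l:Zhu}.

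Next I would conjugate with the exponential weight: set $z=e^{\tau t}v$ and expand $e^{\tau t}\Delta u$ in terms of $z$, producing a second-order operator $P_\tau z = z_{tt} + z_{\theta\theta} + \tau^2 z + (\text{first order in }\tau)\,z + (\text{lower order})$. Expanding $\|P_\tau z\|_{L^2}^2$ and integrating by parts in $t$ and $\theta$, the cross terms between the symmetric and skew-symmetric parts of $P_\tau$ produce the positive gain: the crucial inequality is the standard one-dimensional estimate
\begin{equation*}
\int \big(|z_{tt}|^2 + \tau^2|z_t|^2 + \tau^4 |z|^2\big)\,dt\,d\theta \;\leq\; C\int |P_\tau z|^2\,dt\,d\theta,
\end{equation*}
valid provided $\tau$ stays away from the square roots of eigenvalues $n^2$ of $-\partial_\theta^2$ — which is arranged, as usual, by choosing $\tau$ in a discrete set or shifting $\tau$ by a bounded amount, since the gaps between consecutive values $\sqrt{n^2}$ and $\sqrt{(n+1)^2}$ are bounded below. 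Translating back through $z=e^{\tau t}v$, $t=-\log\phi_\varepsilon(s)$ and undoing polar coordinates recovers the terms $\tau^{3-2k}\int\rho^{2k+\varepsilon-2\tau}|D^k u|^2$ for $k=0,1$ (the power $2k+\varepsilon$ rather than $2k+1$ arising precisely from the extra factor $(1+s^\varepsilon)^{-1/\varepsilon}$ in $\phi_\varepsilon$ versus $s$, together with the Jacobian $s\,ds\,d\theta = s^2\,dt\,d\theta/(1+s^\varepsilon)$), and the power $\rho^{4-2\tau}$ on the right.

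The additional first term $\tau^2 r\int \rho^{-1-2\tau}u^2$ is of a different nature and would be handled separately at the end, exploiting the support condition $u\in C^\infty_0(B_1\setminus\overline{B}_{r/4})$: since $u$ vanishes for $|x|\le r/4$, a one-dimensional Hardy inequality on the radial variable (Proposition \ref{prop:Hardy}, applied after the substitution) gives $\int_{r/4}^{\cdot}\frac{u^2}{s^2}\,s\,ds \lesssim \int (u_s)^2 s\,ds$ with the boundary term at $s=r/4$ contributing the weight $r^{-1-2\tau}$ with the factor $r$; one absorbs this against the $\tau^{3}\int\rho^{\varepsilon-2\tau}u^2$ and $\tau\int\rho^{2+\varepsilon-2\tau}|Du|^2$ terms already obtained, at the cost of enlarging $\tau_0$ and $C$. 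The main obstacle I anticipate is bookkeeping the lower-order terms generated by the non-constant coefficient $b(t)$ and by the curvature term $\tfrac1{s^2}u_{\theta\theta}$ through the change of variables: these must be shown to be genuinely lower order in $\tau$ (gaining a negative power of $\tau$ or an extra power of $\rho$) so they can be absorbed, and this requires careful tracking of how many $t$-derivatives fall on $b$ versus on $z$; once that is organized, the estimate follows by the standard commutator/integration-by-parts scheme. This Proposition then feeds, by the iteration argument described in the Appendix, into the proof of Proposition \ref{prop:Carleman} for $\Delta^2$.
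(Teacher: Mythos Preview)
Your proposal has two genuine gaps that miss the mechanism the paper actually uses.

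\medskip
\textbf{First gap: the role of the weight $\phi_\varepsilon$.} You set $t=-\log\phi_\varepsilon(s)$, which makes the weight linear in $t$ but turns the operator into one with variable coefficients $b(t)$; you then invoke the ``avoid integer $\tau$'' trick. That trick is specific to the pure power weight $|x|^{-\tau}$ with the \emph{constant-coefficient} operator $\partial_t^2+\partial_\theta^2$, and it does not apply once the coefficients depend on $t$ (the ``eigenvalues'' are no longer constants). More importantly, the whole point of the factor $(1+s^\varepsilon)^{-1/\varepsilon}$ is to \emph{avoid} the integer-$\tau$ obstruction, not to perturb around it. The paper takes $t=\log s$, so that $e^{2t}\Delta = \partial_t^2+\partial_\theta^2$ has constant coefficients, and conjugates by $e^{-\tau\varphi(t)}$ with $\varphi(t)=t-\varepsilon^{-1}\log(1+e^{\varepsilon t})$. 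The key computation is that $\varphi''(t)=-\varepsilon e^{\varepsilon t}(1+e^{\varepsilon t})^{-2}<0$ strictly; writing $\mathcal{L}_\tau=\mathcal{A}_\tau+\mathcal{S}_\tau$ (antisymmetric plus symmetric) and expanding $\int\gamma|\mathcal{L}_\tau f|^2$ with $\gamma=1/\varphi'$, the commutator term $2\int\gamma\,\mathcal{A}_\tau f\,\mathcal{S}_\tau f$ integrates by parts to produce precisely the positive terms $\tau^3\int(-\varphi''\varphi')f^2$ and $\tau\int(-\gamma\varphi'')(f_t^2+f_\theta^2)$, valid for \emph{all} large $\tau$. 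This is where the exponent $\varepsilon$ in $\rho^{2k+\varepsilon-2\tau}$ comes from: it is the rate $-\varphi''\sim\varepsilon e^{\varepsilon t}$, not the Jacobian.

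\medskip
\textbf{Second gap: the extra term $\tau^2 r\int\rho^{-1-2\tau}u^2$.} You propose to absorb it into the already-obtained gradient term $\tau\int\rho^{2+\varepsilon-2\tau}|Du|^2$, but that carries only $\tau^1$, so after Hardy you would end up with $\tau r$, not $\tau^2 r$. The paper instead keeps the full $\int\gamma|\mathcal{A}_\tau f|^2$ on the right in the basic inequality $\int\gamma|\mathcal{L}_\tau f|^2\ge\int\gamma|\mathcal{A}_\tau f|^2+2\int\gamma\mathcal{A}_\tau f\mathcal{S}_\tau f$; since $\mathcal{A}_\tau f=\tau\varphi''f+2\tau\varphi'f'$, this yields an unweighted $\tau^2\int f_t^2$. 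Hardy's inequality in $t$ then gives $\int e^{-t}f^2\le 4\int e^{-t}f_t^2$, and the support condition $t\ge\log(r/4)$ bounds $e^{-t}\le 4/r$ on the right, producing $\tau^2 r\int e^{-t}f^2\le C\tau^2\int f_t^2\le C\int|\mathcal{L}_\tau f|^2$. After undoing the substitutions this is exactly the first term of \eqref{Carlm-delta}. The Bakri-type use of $|\mathcal{A}_\tau f|^2$ (rather than only the commutator) is the new ingredient here, and your outline does not capture it.
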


\begin{proof}
Let $u$ be an arbitrary function in $C^{\infty}_0\left(B_1\setminus \overline{B}_{r/4}\right)$ and let us express the two dimensional Laplacian in polar coordinates $(\varrho,\vartheta)$, that is (here and in the sequel $\mathbb{S}^1=\partial B_1$)

\begin{equation}\label{laplace-polar-1}
    \Delta u=u_{\varrho\varrho}+\frac{1}{\varrho}u_{\varrho}+\frac{1}{\varrho^2}u_{\vartheta\vartheta}, \mbox{ for } \varrho>0, \vartheta\in \mathbb{S}^1.
\end{equation}
By the change of variable $\varrho=e^t$, $\widetilde{u}(t,\vartheta)=u\left(e^t,\vartheta\right)$, $(t,\vartheta)\in (-\infty,0)\times \mathbb{S}^1$ we have

\begin{equation}\label{laplace-polar-2}
    e^{2t}(\Delta u)(e^t,\vartheta)=\mathcal{L}\widetilde{u}:=(\widetilde{u}_{tt}+\widetilde{u}_{\vartheta\vartheta})(t,\vartheta) , \mbox{ for } (t,\vartheta)\in (-\infty,0)\times \mathbb{S}^1.
\end{equation}
For sake of brevity, for any smooth function $h$, we shall write $h'$, $h''$, ... instead of $h_t$, $h_{tt}$, ... By \eqref{peso} we have (here and in the sequel we omit the subscript $\varepsilon$)

\begin{equation}\label{peso-1}
\varphi(t):=\log(\phi(e^t))=t-\varepsilon^{-1}\log\left(1+e^{\varepsilon t}\right), \quad \mbox{ for } t\in(-\infty,0).
\end{equation}

We have
\begin{equation}\label{peso-derivate}
\varphi'(t)=\frac{1}{1+e^{\varepsilon t}}, \quad  \varphi''(t)=-\frac{\varepsilon e^{\varepsilon t}}{(1+e^{\varepsilon t})^2} \mbox{ ,}  \quad \mbox{ for } t\in(-\infty,0).
\end{equation}

Let

\begin{equation*}
f(t,\vartheta)=e^{-\tau\varphi}\widetilde{u}(t,\vartheta) , \mbox{ for } (t,\vartheta)\in(-\infty,0)\times \mathbb{S}^1.
\end{equation*}
We have

\begin{equation}\label{L-tau}
\mathcal{L}_{\tau}f:=e^{-\tau\varphi}\mathcal{L}(e^{\tau\varphi}f)=\underset{\mathcal{A}_{\tau}f}{\underbrace{\tau \varphi''f+2\tau\varphi'f'}}+\underset{\mathcal{S}_{\tau}f}{\underbrace{\tau^2\varphi'^2f+f''+f_{\vartheta\vartheta}}}.
\end{equation}

Denote by $\int(\cdot)$ the integral $\int^{0}_{-\infty}\int_{\mathbb{S}^1}(\cdot) d\vartheta dt$ and let

\begin{equation}\label{gamma}
\gamma:=\frac{1}{\varphi'}=1+e^{\varepsilon t}.
\end{equation}

We have

\begin{gather}\label{square}
\int\gamma\left\vert \mathcal{L}_{\tau}f\right\vert^2\geq \int\gamma\left\vert\mathcal{A}_{\tau}f\right\vert^2 +2\int\gamma\mathcal{A}_{\tau}f\mathcal{S}_{\tau}f,
\end{gather}

\begin{gather}\label{commutator}
2\int\gamma\mathcal{A}_{\tau}f\mathcal{S}_{\tau}f=2\int\gamma\left(\tau \varphi''f+2\tau\varphi'f'\right)f_{\vartheta\vartheta}+\\ \nonumber
+2\int\gamma\left(\tau \varphi''f+2\tau\varphi'f'\right)\left(\tau^2\varphi'^2f+f''\right):=I_1+I_2.
\end{gather}

Let us examine $I_1$.

By integration by parts and taking into account \eqref{gamma}, we have

\begin{gather*}
I_1=2\int\left(\tau\gamma\varphi''ff_{\vartheta\vartheta}+2\tau \gamma\varphi'f'f_{\vartheta\vartheta}\right)=\\
=2\int\left(-\tau\gamma\varphi'' f_{\vartheta}^2-2\tau \gamma\varphi'f'_{\vartheta}f_{\vartheta}\right)=2\int\left(-\tau\gamma\varphi'' f_{\vartheta}^2-
\tau \gamma\varphi'\left( f_{\vartheta}^2\right)^{\prime}\right)=\\
=2\tau\int\gamma'\varphi' f_{\vartheta}^2=2\varepsilon\tau\int\frac{e^{\varepsilon t}}{1+e^{\varepsilon t}} f_{\vartheta}^2.
\end{gather*}
Hence, we have
\begin{gather}\label{I-1}
I_1=2\varepsilon\tau\int\frac{e^{\varepsilon t}}{1+e^{\varepsilon t}}f_{\vartheta}^2.
\end{gather}
Now, let us consider $I_2$.

By integration by parts, we have

\begin{gather}\label{I-2}
I_2=2\int\gamma\left(\tau^3 \varphi''\varphi'^2f^2+2\tau^3\varphi'^3ff'+\tau\varphi''ff''+2\tau \varphi'f'f''\right)=\\ \nonumber
=2\int\tau^3\gamma \varphi''\varphi'^2f^2+\tau^3\left(\gamma\varphi'^3\right)\left(f^2\right)^{\prime}-\tau\left(\gamma\varphi''f\right)^{\prime}f'+
\tau\gamma\varphi^{\prime}\left(f'^2\right)^{\prime}.
\end{gather}
Since $\gamma\varphi^{\prime}=1$, we have
\begin{equation}\label{I-2-2}
\int\gamma\varphi^{\prime}\left(f'^2\right)^{\prime}=\int\left(f'^2\right)^{\prime}=0,
\end{equation}
and the last term in the last integral of \eqref{I-2} vanishes.
By considering the first and second term in the last integral of \eqref{I-2}, we have
\begin{equation*}
2\int\tau^3\gamma \varphi''\varphi'^2f^2+\tau^3\left(\gamma\varphi'^3\right)\left(f^2\right)^{\prime}=2\int\tau^3 \varphi''\varphi'f^2-\tau^3\left(\varphi'^2\right)^{\prime}f^2=-2\tau^3\int\varphi''\varphi'f^2.
\end{equation*}
By \eqref{peso-derivate}, we have

$$\varphi''\varphi'=-\frac{\varepsilon e^{\varepsilon t}}{\left(1+e^{\varepsilon t}\right)^3},$$
and, therefore,

\begin{equation}\label{I-2-3}
2\int\tau^3\gamma \varphi''\varphi'^2f^2+\tau^3\left(\gamma\varphi'^3\right)\left(f^2\right)^{\prime}=2\tau^3\int\frac{\varepsilon e^{\varepsilon t}}{\left(1+e^{\varepsilon t}\right)^3}f^2.
\end{equation}
Concerning the third term in the last integral of \eqref{I-2}, we have

\begin{gather*}
2\int-\tau\left(\gamma\varphi''f\right)^{\prime}f'=2\tau\int-\gamma\varphi''f'^2-\left(\gamma\varphi''\right)^{\prime}ff'
=2\tau\int-\gamma\varphi''f'^2-\frac{1}{2}\left(\gamma\varphi''\right)''f^2,
\end{gather*}
and, by \eqref{gamma}, \eqref{peso-derivate}, we have
$$-\gamma\varphi''=\frac{\varepsilon e^{\varepsilon t}}{1+e^{\varepsilon t}}.$$
In addition, it is easy to check that
$$\left\vert\left(\gamma\varphi''\right)''\right\vert\leq \frac{\varepsilon^3e^{\varepsilon t}}{\left(1+e^{\varepsilon t}\right)^3}, \quad \mbox{ for every } t\in (-\infty,0),$$ hence

\begin{equation}\label{I-2-4}
2\int-\tau\left(\gamma\varphi''f\right)^{\prime}f'\geq 2\tau\int\frac{\varepsilon e^{\varepsilon t}}{1+e^{\varepsilon t}}f'^2-\tau\int \frac{\varepsilon^3e^{\varepsilon t}}{\left(1+e^{\varepsilon t}\right)^3} f^2.
\end{equation}
By using inequalities \eqref{I-2}-\eqref{I-2-4}, we have
\begin{gather}\label{I-2-5}
I_2\geq 2\tau^3\int\frac{\varepsilon e^{\varepsilon t}}{\left(1+e^{\varepsilon t}\right)^3}\left(1-\varepsilon^2\tau^{-2}\right)f^2+2\tau\int\frac{\varepsilon e^{\varepsilon t}}{1+e^{\varepsilon t}}f'^2\geq\\\nonumber
\geq \tau^3\int\frac{\varepsilon e^{\varepsilon t}}{\left(1+e^{\varepsilon t}\right)^3}f^2+2\tau\int\frac{\varepsilon e^{\varepsilon t}}{1+e^{\varepsilon t}}f'^2,
\end{gather}
for every $\tau\geq \varepsilon/\sqrt{2}$.

By \eqref{commutator}--\eqref{I-2-5} we have

\begin{gather}\label{commutator-final}
\int\gamma\left\vert L_{\tau}f\right\vert^2\geq \int\gamma\left\vert\mathcal{A}_{\tau}f\right\vert^2+\\ \nonumber
+2\varepsilon\tau\int\frac{e^{\varepsilon t}}{1+e^{\varepsilon t}}\left(f'^2+f_{\vartheta}^2\right)+\tau^3\varepsilon\int\frac{ e^{\varepsilon t}}{\left(1+e^{\varepsilon t}\right)^3}f^2
\end{gather}
for every $\tau\geq \varepsilon/\sqrt{2}$ and for every $f\in C_0^{\infty}((-\infty,0)\times\mathbb{S}^1)$.

In order to obtain the first term on the left hand side of \eqref{Carlm-delta}, inspired by \cite[Theorem 2.1]{l:Bakri1}, we use the first term on the right hand side of \eqref{commutator-final}.

Observe that by the trivial inequality $(a+b)^2\geq\frac{1}{2}a^2-b^2$ and by \eqref{peso-derivate}, \eqref{gamma}, we get

\begin{gather}\label{antisimm-1}
\int\gamma\left\vert\mathcal{A}_{\tau}f\right\vert^2\geq \frac{1}{2}\int\gamma \left(2\tau\varphi'f'\right)^2-\int\gamma\left(\tau \varphi''f\right)^2= \\ \nonumber
=2\tau^2\int \frac{1}{1+e^{\varepsilon t}}f'^2- \varepsilon^2\tau^2\int\frac{e^{2\varepsilon t}}{(1+e^{\varepsilon t})^3}f^2, \mbox{ for every } \tau\geq \frac{\varepsilon}{\sqrt{2}}.
\end{gather}
By inserting the inequality \eqref{antisimm-1} in \eqref{commutator-final} we have

\begin{gather}\label{antisimm-2}
\int\gamma\left\vert \mathcal{L}_{\tau}f\right\vert^2\geq 2\tau^2\int \frac{1}{1+e^{\varepsilon t}}f'^2+ \varepsilon\tau^3\int \frac{e^{\varepsilon t}\left(1-\varepsilon\tau^{-1}e^{\varepsilon t}\right)}{(1+e^{\varepsilon t})^3}f^2+\\ \nonumber
+2\varepsilon\tau\int\frac{e^{\varepsilon t}}{1+e^{\varepsilon t}}\left(f'^2+f_{\vartheta}^2\right), \mbox{ for every } \tau\geq\frac{\varepsilon}{\sqrt{2}}.
\end{gather}
Now, noticing that $\left(1-\varepsilon\tau^{-1}e^{\varepsilon t}\right)\geq 1/2$ for every $\tau\geq \varepsilon/2$ and by using the trivial estimate $\frac{1}{1+e^{\varepsilon t}}\geq 1/2$ for $t\in (-\infty,0)$, \eqref{antisimm-2} gives

\begin{gather}\label{antisimm-3}
\int\gamma\left\vert \mathcal{L}_{\tau}f\right\vert^2\geq \tau^2\int f'^2+ \frac{\varepsilon\tau^3}{8}\int e^{\varepsilon t}f^2
+\varepsilon\tau\int e^{\varepsilon t}\left(f'^2+f_{\vartheta}^2\right),
\end{gather}
for every $\tau\geq \frac{\varepsilon}{\sqrt{2}}$.

Now, by Proposition \ref{prop:Hardy} we have
\begin{gather}\label{new-Hardy}
\int^0_{-\infty} f^2(t,\vartheta)e^{-t}dt=\int^1_{0} s^{-2}f^2(\log s,\vartheta)ds\leq \\\nonumber
\leq 4\int^1_{0} \left\vert\frac{\partial}{\partial s}f(\log s,\vartheta)\right\vert^2 ds=4\int^0_{-\infty} f'^2(t,\vartheta)e^{-t}dt, \quad \hbox{ for every } \vartheta\in \mathbb{S}^1.
\end{gather}

On the other side, since $f(t,\vartheta)=0$ for every $t\leq \log (r/4)$, by \eqref{new-Hardy} we have

$$\int^0_{-\infty} f^2(t,\vartheta)e^{-t}dt\leq 4\int^{\log\frac{r}{4}}_{-\infty} f'^2(t,\vartheta)e^{-t}dt\leq\frac{16}{r}\int^{0}_{-\infty} f'^2(t,\vartheta), \quad \hbox{ for every } \vartheta\in \mathbb{S}^1.$$
By integrating over $\mathbb{S}^1$ the above inequality and by using \eqref{antisimm-3}, we have

\begin{equation}\label{antisimm-4}
\int f^2e^{-t}\leq\frac{16}{r}\int f'^2\leq \frac{16}{\tau^2r}\int\gamma\left\vert \mathcal{L}_{\tau}f\right\vert^2, \mbox{ for every } \tau\geq \frac{\varepsilon}{\sqrt{2}}.
\end{equation}
By \eqref{antisimm-4} and \eqref{antisimm-3} we have
\begin{gather}\label{antisimm-5}
C\int\left\vert \mathcal{L}_{\tau}f\right\vert^2\geq \varepsilon\tau^3\int e^{\varepsilon t}f^2+\\\nonumber
+\varepsilon\tau\int e^{\varepsilon t}\left(f'^2+f_{\vartheta}^2\right)+\tau^2r\int f^2e^{-t}, \mbox{ for every } \tau\geq \frac{\varepsilon}{\sqrt{2}},
\end{gather}
where $C$ is an absolute constant.

Now we come back to the original coordinates. Recalling that $f(t,\vartheta)=e^{-\tau\varphi}u(e^t,\vartheta)$, and by using \eqref{peso}, \eqref{laplace-polar-2} and \eqref{L-tau}, we have

\begin{gather}\label{final-1}
\int^{0}_{-\infty}\int_{\mathbb{S}^1}\left\vert \mathcal{L}_{\tau}f\right\vert^2d\vartheta dt=\int^{0}_{-\infty}\int_{\mathbb{S}^1}e^{-2\tau\varphi(t)}e^{4t}|(\Delta u)(e^t,\vartheta)|^2d\vartheta dt=\\\nonumber
= \int^{1}_{0}\int_{\mathbb{S}^1}e^{-2\tau\varphi(\log\varrho)}\varrho^{3}|(\Delta u)(e^t,\vartheta)|^2d\vartheta d\varrho=\int_{B_1}\rho^{-2\tau}|x|^2|\Delta u|^2dx.
\end{gather}
Similarly, we have
\begin{equation}\label{final-2}
\int^{0}_{-\infty}\int_{\mathbb{S}^1} f^2e^{-t}d\vartheta dt=\int_{B_1}\rho^{-2\tau}|x|^{-3} u^2dx
\end{equation}
and

\begin{equation}\label{final-3}
\int^{0}_{-\infty}\int_{\mathbb{S}^1} f^2e^{\varepsilon t}d\vartheta dt=\int_{B_1}\rho^{-2\tau}|x|^{\varepsilon-2} u^2dx.
\end{equation}
Concerning the second integral on the right hand side of \eqref{antisimm-3}, let $\delta\in (0,1)$ to be choosen later, we have

\begin{gather}\label{final-4}
\int^{0}_{-\infty}\int_{\mathbb{S}^1} e^{\varepsilon t}\left(f'^2+f_{\vartheta}^2\right)d\vartheta dt\geq\delta\int^{0}_{-\infty}\int_{\mathbb{S}^1} e^{\varepsilon t}\left(f'^2+f_{\vartheta}^2\right)d\vartheta dt\geq\\\nonumber
\geq \frac{\delta}{2}\int^{0}_{-\infty}\int_{\mathbb{S}^1}e^{\varepsilon t}e^{-2\tau\varphi(t)}\left(|u_{\varrho}(e^t,\vartheta)|^2e^{2t}
+|u_{\vartheta}(e^t,\vartheta)|^2-
2\tau^2|u(e^t,\vartheta)|^2\right)d\vartheta dt=\\\nonumber
=\frac{\delta}{2}\int_{B_1}\rho^{-2\tau}|x|^{\varepsilon-2} \left(|x|^{2}|\nabla u|^2-2\tau^{2}|u|^2\right)dx.
\end{gather}
Choosing $\delta=\frac{1}{2}$, and by \eqref{antisimm-5} and \eqref{final-1}--\eqref{final-4}, we have

\begin{gather}\label{final-5}
C\int_{B_1}\rho^{-2\tau}|x|^2|\Delta u|^2dx\geq \frac{\varepsilon\tau}{4}\int_{B_1}\rho^{-2\tau}|x|^{\varepsilon} |\nabla u|^2dx+\\\nonumber
+\frac{\varepsilon\tau^3}{2}\int_{B_1}\rho^{-2\tau}|x|^{\varepsilon-2} u^2dx+\tau\int_{B_1}\rho^{-2\tau}|x|^{-3} u^2dx,
\end{gather}
for every $\tau\geq \frac{\varepsilon}{\sqrt{2}}$ and for every $u\in C^\infty_0(B_1\setminus \overline{B}_{r/4})$.
Finally, since by \eqref{peso} we have $$2^{-\frac{1}{\varepsilon}}|x|\leq \rho(x)\leq |x|,$$
we can replace $\tau$ in \eqref{final-5} by $(\tau-1)$ and we obtain the desired inequality \eqref{Carlm-delta}.
\end{proof}

\medskip

In order to prove Proposition \ref{prop:Carleman}, we need the following

\begin{lem}\label{formule}
Given $\zeta\in C^2(B_1\setminus\{0\})$ and $u\in
C^\infty_0(B_1\setminus\{0\})$, the following identities hold
true:
\begin{subequations}
\label{identity}
\begin{equation}
\label{identity1} \int \zeta u\Delta u=-\int(\zeta|\nabla u|^2+(\nabla
u\cdot\nabla \zeta)u),
\end{equation}
\begin{equation}
\label{identity2} \int \zeta\sum_{j,k=1}^2|\partial_{jk}u|^2=\int(-D^2\zeta\nabla u\cdot
\nabla u+\Delta \zeta|\nabla u|^2+\zeta(\Delta u)^2),
\end{equation}
\begin{gather}
\label{identity3}
\int \zeta\sum_{i,j,k=1}^2|\partial_{ijk}u|^2=-\int \zeta\Delta u \Delta^2
u+\\\nonumber
\int(-\hbox{tr}(D^2 uD^2 \zeta D^2u)
+\Delta \zeta|D^2 u|^2+\frac{1}{2}\Delta \zeta(\Delta u)^2).
\end{gather}
\end{subequations}
\end{lem}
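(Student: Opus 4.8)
The plan is to prove all three identities by elementary integration by parts, exploiting that $u\in C^\infty_0(B_1\setminus\{0\})$ is compactly supported in the punctured disc, so that no boundary contributions arise on $\partial B_1$ or near the origin (where $\zeta$ is only assumed $C^2$); since at most two derivatives ever fall on $\zeta$, this regularity is enough. Throughout, $\int(\cdot)$ denotes $\int_{B_1}(\cdot)\,dx$, and I use the elementary fact $\sum_k \partial_k u\,\partial_{jk}u=\tfrac12\partial_j|\nabla u|^2$. For the first identity, one integration by parts in $x_k$ gives $\int \zeta u\,\partial_{kk}u=-\int \partial_k(\zeta u)\,\partial_k u=-\int(\partial_k\zeta\,u+\zeta\,\partial_k u)\,\partial_k u$; summing over $k$ yields \eqref{identity1}.

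For the second identity, write $\partial_{jk}u=\partial_j(\partial_k u)$ and integrate by parts in $x_j$:
\[
\int\zeta\sum_{j,k}(\partial_{jk}u)^2=-\int\sum_{j,k}\partial_k u\,\bigl(\partial_j\zeta\,\partial_{jk}u+\zeta\,\partial_{jjk}u\bigr).
\]
In the term carrying $\partial_j\zeta$, use $\sum_k\partial_k u\,\partial_{jk}u=\tfrac12\partial_j|\nabla u|^2$ and integrate by parts once more to get $\tfrac12\int\Delta\zeta\,|\nabla u|^2$; in the term carrying $\zeta$, use $\sum_j\partial_{jjk}u=\partial_k\Delta u$ to obtain $-\int\zeta\,\nabla u\cdot\nabla\Delta u=\int\Delta u\,\mathrm{div}(\zeta\nabla u)=\int\Delta u\,(\nabla\zeta\cdot\nabla u)+\int\zeta(\Delta u)^2$. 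A further integration by parts turns $\int\Delta u\,(\nabla\zeta\cdot\nabla u)$ into $-\int D^2\zeta\,\nabla u\cdot\nabla u+\tfrac12\int\Delta\zeta\,|\nabla u|^2$. Collecting the pieces (the two $\tfrac12\int\Delta\zeta\,|\nabla u|^2$ combine to a full one) gives exactly \eqref{identity2}.

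For the third identity, the cleanest route is to apply the already-established identity \eqref{identity2} with $u$ replaced by $\partial_m u$ (which is again in $C^\infty_0(B_1\setminus\{0\})$) and sum over $m=1,2$:
\[
\int\zeta\sum_{i,j,k}(\partial_{ijk}u)^2=\int\Bigl(-\sum_m D^2\zeta\,\nabla(\partial_m u)\cdot\nabla(\partial_m u)+\Delta\zeta\sum_m|\nabla\partial_m u|^2+\zeta\sum_m(\Delta\partial_m u)^2\Bigr).
\]
Then one identifies $\sum_m D^2\zeta\,\nabla(\partial_m u)\cdot\nabla(\partial_m u)=\sum_{i,j,m}\partial_{ij}\zeta\,\partial_{im}u\,\partial_{jm}u=\mathrm{tr}(D^2u\,D^2\zeta\,D^2u)$ (using symmetry of $D^2u$ and $D^2\zeta$), $\sum_m|\nabla\partial_m u|^2=|D^2u|^2$, and $\sum_m(\Delta\partial_m u)^2=|\nabla\Delta u|^2$. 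Finally, applying $\int\zeta|\nabla w|^2=-\int w\,\mathrm{div}(\zeta\nabla w)=-\tfrac12\int\nabla\zeta\cdot\nabla(w^2)-\int\zeta w\Delta w$ with $w=\Delta u$ rewrites $\int\zeta|\nabla\Delta u|^2$ as $\tfrac12\int\Delta\zeta(\Delta u)^2-\int\zeta\,\Delta u\,\Delta^2u$, which assembles into \eqref{identity3}.

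I expect no conceptual difficulty here; the proof is pure bookkeeping of indices. The one spot deserving care is verifying the tensor contraction $\sum_{i,j,k}\partial_{ij}\zeta\,\partial_{ik}u\,\partial_{jk}u=\mathrm{tr}(D^2u\,D^2\zeta\,D^2u)$, and checking at each integration by parts that the integrand is genuinely compactly supported away from $0$ and $\partial B_1$, so that all boundary terms vanish and $\zeta\in C^2$ suffices. A minor alternative worth keeping in mind, should the reduction of \eqref{identity3} to \eqref{identity2} feel indirect, is to prove \eqref{identity3} directly by the same two-step integration by parts as for \eqref{identity2}, now starting from $\partial_{ijk}u=\partial_i(\partial_{jk}u)$ and using $\sum_i\partial_{iijk}u=\partial_{jk}\Delta u$.
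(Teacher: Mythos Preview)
Your proof is correct and follows essentially the same approach as the paper: all three identities are obtained by repeated integration by parts, with \eqref{identity3} derived by applying \eqref{identity2} to $\partial_m u$ and summing over $m$, together with \eqref{identity1} applied to $\Delta u$ to handle the $\int\zeta|\nabla\Delta u|^2$ term. The only cosmetic difference is that for \eqref{identity2} the paper starts from $\int\zeta(\Delta u)^2$ and works toward $\int\zeta|D^2u|^2$, whereas you start from the latter; the manipulations are otherwise identical.
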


\begin{proof}
Concerning \eqref{identity1} it is enough to note that
\begin{equation}
\label{proof_identity1}
\int \zeta u\Delta u=-\int\nabla
u\cdot\nabla(\zeta u)=-\int(\zeta|\nabla u|^2+(\nabla u\cdot\nabla \zeta)u).
\end{equation}
In order to prove \eqref{identity2}, let us compute
\begin{gather*}
    \label{proof_identity2}
\int \zeta(\Delta u)^2=\int \sum_{j,k=1}^2 \zeta \partial_{jj}u \partial_{kk}u=\\
-\int
\sum_{j,k=1}^2 (\partial_{k}\zeta \partial_{jj}u \partial_{k}u+\zeta \partial_{jjk}u \partial_{k}u)=
\int \sum_{j,k=1}^2 \partial_{j}(\partial_{k}\zeta \partial_{k}u)\partial_{j}u+\partial_{j}(\zeta\partial_{k}u)\partial_{jk}u=\\
=\int D^2 \zeta\nabla u\cdot\nabla u+\zeta\sum_{j,k=1}^2|\partial_{jk}u|^2+2\sum_{j,k=1}^2
\partial_{k}\zeta \partial_{jk}u\partial_{j}u.
\end{gather*}
Noticing that $\partial_{jk}u\partial_{j}u=\frac{1}{2}\partial_k(\partial_ju)^2$ and
integrating by parts the last term on the right hand side of the
above identity, we obtain \eqref{identity2}.

In order to derive
\eqref{identity3}, let us apply \eqref{identity1} to $\Delta u$,
obtaining
\begin{equation}
    \label{proof_identity3_1}
\int \zeta\Delta u\Delta^2 u=-\int(\zeta|\nabla \Delta u|^2+(\nabla \Delta
u\cdot\nabla \zeta)\Delta u).
\end{equation}
{}From \eqref{identity2}, we have
\begin{equation}
    \label{proof_identity3_2}
-\int \zeta|\nabla \Delta u|^2=-\int \hbox{tr}(D^2u D^2 \zeta
D^2 u)-\Delta \zeta |D^2 u|^2+\zeta\sum_{i,j,k=1}^2|\partial_{ijk}u|^2,
\end{equation}
and, in addition,
\begin{equation}
    \label{proof_identity3_3}
-\int (\nabla \Delta u\cdot\nabla \zeta)\Delta u=-\frac{1}{2}\int
\sum_{j=1}^2\partial_{j}\zeta\partial_{j}(\Delta u)^2=\frac{1}{2} \int \Delta \zeta(\Delta u)^2.
\end{equation}
{}From \eqref{proof_identity3_1}--\eqref{proof_identity3_3},
identity \eqref{identity3} follows.
\end{proof}

\begin{proof}[Proof of Proposition \ref{prop:Carleman}.]

Let $r\in (0,1)$. For the sake of brevity, given two quantities $X,Y$ in which the parameter $\tau$ in involved, we will write $X\lesssim Y$ to mean that there exist constants $C, C'$ independent on $\tau$ and $r$ such that $X\leq CY$ for every $\tau\geq C'$.

Let $U$ be an arbitrary function of $C^\infty_0(B_1\setminus \overline{B}_{r/4})$. By applying \eqref{Carlm-delta} to $u=\Delta U$ we have

\begin{gather}\label{Carlm-delta2-1}
	\int\rho^{8-2\tau}|\Delta^2 U|^2=\int\rho^{4-2(\tau-2)}|\Delta(\Delta U)|^2\gtrsim \\\nonumber
   \gtrsim \tau^2r\int\rho^{-1-2(\tau-2)}|\Delta U|^2=\tau^2r\int\rho^{4-2(\tau+\frac{1}{2})}|\Delta U|^2 \gtrsim\\\nonumber
   \gtrsim \tau^4r^2\int\rho^{-2-2\tau}|U|^2, \mbox{ for every }  U\in C^\infty_0(B_1\setminus \overline{B}_{r/4}).
\end{gather}

Similarly we have

\begin{gather}\label{Carlm-delta2-2}
	\int\rho^{8-2\tau}|\Delta^2 U|^2\gtrsim \tau^3\int\rho^{\varepsilon-2(\tau-2)}|\Delta U|^2 =\\\nonumber
   =\tau^3\int\rho^{4-2(\tau-\frac{\varepsilon}{2})}|\Delta U|^2 \gtrsim\\\nonumber
   \gtrsim \tau^6\int\rho^{2\varepsilon-2\tau}|U|^2+\tau^4\int\rho^{2+2\varepsilon-2\tau}|\nabla U|^2,
\end{gather}
hence, by \eqref{Carlm-delta2-1} and \eqref{Carlm-delta2-2}, we have

\begin{gather}\label{Carlm-delta2-3}
 \tau^4r^2\int\rho^{-2-2\tau}|U|^2+\tau^6\int\rho^{2\varepsilon-2\tau}|U|^2+\\\nonumber
 +\tau^4\int\rho^{2+2\varepsilon-2\tau}|\nabla U|^2\lesssim \int\rho^{8-2\tau}|\Delta^2 U|^2.
\end{gather}
Now we estimate {}from above the terms with \textit{second derivatives} of $U$.

Let us apply Lemma \ref{formule} with $\zeta=\zeta_1:=\rho^{4+2\varepsilon-2\tau}$. Since
\begin{equation}\label{stime-kappa-1}
 |\nabla \zeta_1| \lesssim \tau \rho^{3+2\varepsilon-2\tau},  \mbox{ and }\quad |D^2 \zeta_1| \lesssim \tau^2 \rho^{2+2\varepsilon-2\tau},
\end{equation}
by \eqref{identity2} and \eqref{stime-kappa-1} we get

\begin{gather}\label{Carlm-delta2-4}
 \int\rho^{4+2\varepsilon-2\tau}|D^2U|^2\lesssim \int\rho^{4+2\varepsilon-2\tau}|\Delta U|^2+\tau^2\int\rho^{2+2\varepsilon-2\tau}|\nabla U|^2.
\end{gather}

By \eqref{Carlm-delta} we have
\begin{gather}\label{Carlm-delta2-5}
 \int\rho^{4+2\varepsilon-2\tau}|\Delta U|^2=\int\rho^{\varepsilon-2(\tau-2-\frac{\varepsilon}{2})}|\Delta U|^2\lesssim \\\nonumber
 \lesssim\tau^{-3} \int\rho^{8+\varepsilon-2\tau}|\Delta^2 U|^2 \leq \tau^{-3} \int\rho^{8-2\tau}|\Delta^2 U|^2.
\end{gather}
Now, we can use \eqref{Carlm-delta2-3} to estimate the second integral on the right hand side of \eqref{Carlm-delta2-4}, obtaining
\begin{gather}\label{Carlm-delta2-6}
\tau^2\int\rho^{2+2\varepsilon-2\tau}|\nabla U|^2\lesssim  \tau^{-2}\int\rho^{8-2\tau}|\Delta^2 U|^2.
\end{gather}
By \eqref{Carlm-delta2-4}, \eqref{Carlm-delta2-5} and \eqref{Carlm-delta2-6} we have

\begin{gather}\label{Carlm-delta 2-5-1}
 \tau^{2} \int\rho^{4+2\varepsilon-2\tau}|D^2 U|^2\lesssim  \int\rho^{8-2\tau}|\Delta^2 U|^2.
\end{gather}

Let us estimate {}from above the terms with \textit{third derivatives} of $U$. To this aim, we apply Lemma \ref{formule} with $\zeta=\zeta_2:=\rho^{6+2\varepsilon-2\tau}$, and likewise to \eqref{stime-kappa-1} we have
\begin{equation}\label{stime-kappa-2}
 |\nabla \zeta_2| \lesssim \tau \rho^{5+2\varepsilon-2\tau},  \mbox{ and } \quad |D^2 \zeta_2| \lesssim \tau^2 \rho^{4+2\varepsilon-2\tau}.
\end{equation}
By \eqref{identity3} and \eqref{stime-kappa-2} we have

\begin{gather}\label{Carlm-delta2-6-1}
 \int\rho^{6+2\varepsilon-2\tau}|D^3 U|^2\lesssim  \int\rho^{6+2\varepsilon-2\tau}|\Delta U||\Delta^2 U|+\tau^2\int\rho^{4+2\varepsilon-2\tau}|D^2 U|^2.
\end{gather}
As next step, we estimate {}from above the first term on the right hand side of \eqref{Carlm-delta2-6-1} as follows
\begin{gather*}
 \int\rho^{6+2\varepsilon-2\tau}|\Delta U||\Delta^2 U|\leq \frac{1}{2}\int\rho^{4+2\varepsilon-2\tau}|\Delta U|^2+\frac{1}{2}\int\rho^{8+2\varepsilon-2\tau}|\Delta^2 U|^2.
\end{gather*}
The above inequality, \eqref{Carlm-delta 2-5-1} and \eqref{Carlm-delta2-6-1} give

\begin{gather}\label{Carlm-delta2-6-2}
 \int\rho^{6+2\varepsilon-2\tau}|D^3 U|^2\lesssim  \int\rho^{8-2\tau}|\Delta^2 U|^2.
\end{gather}
Summing up, \eqref{Carlm-delta2-3}, \eqref{Carlm-delta 2-5-1} and  \eqref{Carlm-delta2-6-2} we have
	\begin{equation}
    \label{5.Carleman_bilapl}
\tau^4r^2\int\rho^{-2-2\tau}|U|^2+\sum_{k=0}^3\tau^{6-2k}\int \rho^{2k+2\epsilon-2\tau}|D^k
U|^2\lesssim \int \rho^{8-2\tau}(\Delta^2 U)^2.
\end{equation}
Finally, choosing $\varepsilon=\frac{1}{2}$ in \eqref{5.Carleman_bilapl} we obtain the wished estimate \eqref{eq:24.4}.
\end{proof}

\bibliographystyle{plain}

\end{document}